\definecolor{darkblue}{rgb}{0.0,0.0,0.55}
\definecolor{darkred}{rgb}{0.45,0.0,0.0}
\definecolor{emph}{gray}{0.950317}
\newtheorem{theorem}{Theorem}[section]
\newtheorem{lemma}[theorem]{Lemma}
\newtheorem{corollary}[theorem]{Corollary}
\newtheorem{proposition}[theorem]{Proposition}
\newtheorem{claim}[theorem]{Claim}
\newtheorem{definition}{Definition}[section]
\newtheorem{assumption}[definition]{Assumption} 
\newtheorem{remark}[definition]{Remark}
\newcommand{\algmpc}{{\sc Forward-Switch}}
\newcommand{\BC}{{\sc Behavior Cloning}}
\newcommand{\forward}{{\sc Forward}}
\newtheorem*{alg}{\underline{\sc Forward Training Algorithm}}
\newtheorem*{alg2}{\underline{\algmpc}}
\def\namedlabel#1#2{\begingroup
   \def\@currentlabel{#2}%
   \label{#1}\endgroup
}
\newcommand{\cW}{\mathcal{W}}
\DeclareMathOperator*{\argmin}{arg\!\min}
\DeclareMathOperator*{\proj}{proj}
\newcommand{\X}{\ensuremath{\mathcal{X}}}
\newcommand{\U}{\ensuremath{\mathcal{U}}}
\newcommand{\W}{\ensuremath{\mathcal{W}}}
\newcommand{\R}{\ensuremath{\mathbb{R}}}
\newcommand{\B}{ {\sf {B}}}
\newcommand{\N}{\ensuremath{\mathbb{N}}}
\newcommand{\norm}[1]{\left\lVert #1 \right\rVert}
\newcommand{\E}{\mathbb{E}}
\renewcommand{\Pr}{\mathbb{P}}
\newcommand{\calD}{\mathcal{D}}
\newcommand{\st}{\text{s.t.}}
\numberwithin{equation}{section}
\renewcommand{\geq}{\geqslant}
\renewcommand{\leq}{\leqslant}
\renewcommand{\succeq}{\succcurlyeq}
\newcommand{\fclexp}{f_{\mathrm{cl}}^{\star}}
\newcommand{\minimize}{\mathrm{minimize}}
\newcommand{\D}{\mathcal{D}}
\newcommand{\nn}{\nonumber}
\newcommand{\xinit}{x}
\newcommand{\eps}{\varepsilon}
\newcommand{\expert}{\pi^\star} 
\newcommand{\traj}[2]{\varphi_t(#2;#1)}
\newcommand{\pibc}{\hat\pi_{\sf BC}}
\newcommand{\dis}{\hat w}
\newcommand{\trajs}[3]{\varphi^{\star}_{#1}(#3;#2)}
\newcommand{\trajj}[3]{\varphi_{#3}(#2;#1)}
\newcommand{\klqr}{K^{\sf \tiny lqr}}
\newcommand{\plqr}{P^{\sf \tiny lqr}}
\newcommand{\dlqr}{\pi^{\sf \tiny lqr}}
\newcommand{\mpc}{\pi^{\sf \tiny mpc}} 
\newcommand{\rmpc}{\boldsymbol{\pi}} 
\newcommand{\piref}{\bar{\pi}}  
\newcommand{\hmpc}{\hat{\pi}}
\newcommand{\tmpc}{\tilde{\pi}}
\newcommand{\xref}{\bar{x}}
\newcommand{\Xref}{\bar{\X}}
\newcommand{\Uref}{\bar{\U}}
\newcommand{\Xzeroref}{\bar{\X}_0}
\newcommand{\Xterref}{\bar{\X}_{\sf f}}
\newcommand{\Olqrref}{\bar{\mathcal{O}}_\infty}
\newcommand{\uref}{\bar{u}}
\newcommand{\ustar}{u^\star}
\newcommand{\hpi}{\hat\pi}
\newcommand{\pihat}{\hat{\pi}}
\newcommand{\vv}[1]{\bar{x}_{#1}}
\newcommand{\xx}[1]{\hat{x}_{#1}}
\newcommand{\cc}[1]{\tilde{x}_{#1}}
\newcommand{\LB}{\norm{B}}
\newcommand{\Vrob}{V^{\sf RMPC}}
\newcommand{\Vmpcref}{V^{\overline{\sf MPC}}}
\newcommand{\LR}{\norm{R}}
\newcommand{\Lg}{L_g}
\newcommand{\Lpi}{L_{\piref}}
\newcommand{\cE}{\mathcal{E}}
\newcommand{\succE}{\bar{\cE}}
\newcommand{\ccE}{\mathcal{F}}
\newcommand{\xstar}{x^{\star}}
\newcommand{\bdd}[1]{\mathfrak{B}_{#1}}
\newcommand{\bddx}{\mathfrak{B}_x}
\newcommand{\ntraj}{\ell}
\newcommand{\Qref}{\bar{Q}}
\newcommand{\Vref}{\bar{J}}
\newcommand{\Vswitch}{\tilde{J}}
\newcommand{\LVref}{L_{\Vref}}
\newcommand{\LVreft}{L_{\Vref_t}}
\newcommand{\LQref}{L_{\Qref}} 
\newcommand{\tinf}{{\hat \tau_\infty}}
\newcommand{\barx}{\bar{x}}
\newcommand{\inva}{\Delta_{\W}} 
\newcommand{\invamin}{\Delta_{\W}^{\min}}  
\newcommand{\Xter}{\X_{\sf f}}
\newcommand{\Pter}{P_{\sf f}}
\newcommand{\pter}{p_{\sf f}}
\newcommand{\Olqr}{\mathcal{O}^{\sf lqr}_\infty}
\newcommand{\pstar}{p^\star}
\newcommand{\tstar}{\tau_\infty^\star}
\newcommand{\vu}{\bm u}
\newcommand{\Dcal}{\mathcal{D}}
\newcommand{\bli}{\begin{list}{{\tiny $\blacksquare$}}{\leftmargin=1.5em}
\setlength{\itemsep}{-1pt}
}
\newcommand{\btri}{	\begin{list}{ $\rhd$}{\leftmargin=1.5em}
		\setlength{\itemsep}{-2pt}}
\newcommand{\blii}{\begin{list}{{  $\bullet$}}{\leftmargin=0.5em}
\setlength{\itemsep}{-1pt}
}
\newcommand{\eli}{\end{list}}
\definecolor{eblue}{RGB}{0, 50, 180}
\definecolor{egreen}{RGB}{0, 180, 50}
\definecolor{persimmon}{rgb}{0.93, 0.35, 0.0}
\definecolor{darkpastelpurple}{rgb}{0.59, 0.44, 0.84}
\definecolor{MITBrown}{RGB}{164, 31, 50}
\author{{\bf Kwangjun Ahn} \\ MIT EECS/LIDS \\ \texttt{kjahn@mit.edu} \and {\bf Zakaria Mhammedi} \\ MIT IDSS/LIDS \\ \texttt{mhammedi@mit.edu} \and  {\bf Horia Mania} \\ MIT EECS/LIDS \\ \texttt{hmania@mit.edu} \and {\bf Zhang-Wei Hong} \\ MIT EECS/CSAIL \\ \texttt{zwhong@mit.edu}\and {\bf Ali Jadbabaie} \\ MIT CEE/LIDS/IDSS \\ \texttt{jadbabai@mit.edu} }
\date{\today}
\title{Model Predictive Control via On-Policy Imitation Learning}
\begin{document}

\maketitle
  
\begin{abstract}
In this paper, we leverage the rapid advances in {\it imitation learning}, a topic of intense recent focus in the Reinforcement Learning  (RL) literature, to develop new sample complexity results and performance guarantees for  data-driven Model Predictive Control (MPC) for constrained linear systems. In its simplest form, imitation learning is an approach that tries to learn an expert policy by querying samples from an expert. Recent approaches to  data-driven MPC have used the simplest form of imitation learning  known as behavior cloning to learn  controllers that mimic the performance of MPC by online  sampling of the trajectories of the closed-loop MPC system. Behavior cloning, however, is  a method  that is known to be data inefficient and suffer from distribution shifts. 
As an alternative, we develop a variant of the forward training algorithm which is an on-policy imitation learning method proposed by \citet{ross2010efficient}. 
Our algorithm uses the structure of constrained linear MPC, and our analysis uses the properties of the explicit MPC solution to  theoretically bound the number of online MPC trajectories needed to achieve optimal performance. We validate our results through simulations and show that the forward training algorithm is indeed superior to behavior cloning when applied to MPC.

\end{abstract}
	
\section{Introduction}
Optimization-based control methods such as  model predictive control (MPC) have been among the most versatile techniques in feedback control design for more than 40 years. Such techniques have been  successfully applied to control of dynamic systems in a variety of domains such as autonomous vehicles  \citep{Murray2003,Jad2002,falcone2007predictive,rosolia2017learning}, chemical plants \citep{QIN2003733}, humanoid robots \citep{kuindersma2016optimization}, and many others. Nonetheless, MPC's versatility comes at a  cost. Having to solve optimization problems online makes it difficult to deploy MPC on high-dimensional systems that have strict latency requirements and limited computational or energy resources. To mitigate this issue, considerable effort went into developing faster, tailored optimization methods for MPC \cite{boyd2011distributed,giselsson2013, jerez2014, koegel2011,lucia2016_ARC, lucia2018_TII, mattingley2012cvxgen,richter2012, zometa2013}.

Instead of following these approaches, we pursue a data-driven methodology. We propose and study a scheme to collect data interactively from a dynamical system in feedback with an MPC controller and in order to learn an explicit controller that maps states to inputs. Such approaches are known in the reinforcement learning literature as {\it imitation learning} \citep{pomerleau1988alvinn, schaal1999imitation} and they are well suited for MPC because one can query MPC for the next input at any desired state; all that is needed is to solve the corresponding optimization problem. Nonetheless, in order to learn controllers that are guaranteed to stabilize dynamical systems, to satisfy state and action constraints, and to obtain low cost, we would need to exploit several properties of MPC. 

Our goal of obtaining an explicit map from states to inputs that encapsulates an MPC controller falls under the purview of explicit MPC~\cite{bemporad2002explicit}, which aims to pre-compute and store the solutions of the optimization problems that might be encountered at runtime \cite{alessio2009survey}. 

In general, explicit MPC aims to pre-compute an exact representation of the MPC controller while we aim to learn a controller that performs as well as MPC with high probability. In the same vain, \citet{hertneck2018learning} and \citet{karg2020efficient} suggest learning a controller from data. However, their approaches collect all the trajectory data using MPC before any learning occurs and do not interact with the dynamics further. The lack of interaction in imitation learning is known to lead to sub-optimal performance because  small learning errors would cause a controller produced by such a method to result in states with a different distribution than those produced by MPC during training. In other words, distribution shift leads to error compounding. Our proposed approach completely avoids this issue. To this end, our contributions in this paper can be summarized as follows:

\begin{itemize}
    \item We start by analyzing the imitation learning method known as the forward training algorithm (\forward) in the setting of control affine systems \cite{ross2010efficient}. 
    
    \item 
    We modify \forward{} to make it suitable for MPC applications with constraints. Firstly, \forward{} learns a different controller for each distinct time step and hence it cannot be applied straightforwardly to problems with long or infinite horizons. Fortunately, after sufficiently many times steps, the MPC controller applied to time invariant linear systems becomes equivalent to the classical linear quadratic regulator (LQR) \cite{sznaier1987suboptimal}. We exploit this property; we modify \forward{} to switch to LQR after a number of time steps estimated from data. Secondly, to improve the robustness of our method we require \forward{} to imitate robust MPC \cite{mayne2005robust} instead of standard MPC. We refer to our modified method as \forward-switch.
    
    \item We theoretically guarantee that a controller learned with \forward-switch stabilizes linear systems and satisfies their constraints as long as certain amount of data is available. Moreover, we bound the cost suboptimality of the learned controller, showing that it approaches optimal performance as more data becomes available. None of the previous works on imitating MPC included such guarantees. We also provide theoretical sample complexity bounds using state of the art tools of high dimensional statistics and statistical learning theory.

    \item We validate the efficacy of the modified forward training algorithm on simulated MPC problems, showing that it surpasses non-interactive approaches. 
\end{itemize}

\section{The Forward Training Algorithm for Control}
	\label{sec:forward}
	
In this section, we present the imitation learning method \forward{}~\citep{ross2010efficient} and bound the distance between the trajectories produced by the learned controller and those produced by the expert when the dynamics are control-affine. In subsequent sections, we specialize our analysis to the case where the expert is a MPC controller applied to constrained linear systems.  

Imitation learning aims to learn from demonstrations a controller $\hpi$ that imitates the behavior of a target controller $\expert$, called \emph{expert policy} or simply \emph{expert} in the reinforcement learning literature.
Imitation learning is valuable  when $\expert$ lacks a closed-form expression or is expensive to query in general. For instance, $\expert$ could be a human  performing a task or a MPC controller. More formally, in imitation learning it is assumed that for a state $x$ we can access the input $\expert(x)$. Then, the aim is to use data $\{x_i,\expert(x_i)\}$ to learn a controller $\hpi$ such that $\hpi(x) \approx \expert(x)$.  

In this section, we consider control-affine dynamical systems with constraints: 
\begin{align} \label{def:control_affine}
		x_{t+1} = f(x_t) + g(x_t)u_t\,, \quad x_t \in \X,~u_t \in \U \,,
\end{align}
where $\X\subset \R^{d_x}$ is the state space and $\U\subset \R^{d_u}$ is the input space.
We also find it useful to denote $\varphi_t(x_0, \{u_t\}_{t \geq 0})$ the state $x_t$ that evolves according to $x_{t+1} = f(x_t) + g(x_t)u_t$ and starts at the initial state $x_0 $. 
When the dynamics evolve according to a time-varying feedback controller $\pi = \pi_{0:t-1}$ (i.e.~$\pi_0$ is used at time $0$, $\pi_1$ at time $1$, etc.) we denote the state at time $t$ by $\traj{\pi_{0:t-1}}{x_0}$. If the controller $\pi$ is time-invariant, we simply write $\traj{\pi}{x_0}$. 

Behavior cloning (BC) is the simplest imitation learning method. It consists of collecting $m$ independent trajectories $\traj{\expert}{x_0^{(i)}}$ with initial states $x_0^{(1)}$, $x_0^{(2)}$, \ldots, $x_0^{(m)}$ sampled randomly from an initial distribution $\Dcal$. Then, BC produces a controller $\pibc$ through empirical risk minimization (ERM):
\begin{align} \tag{\sc  Behavior Cloning} \label{BC} 
		\pibc \in \underset{\pi\in\Pi}{\minimize} ~~   \sum_{i=1}^{n} \sum_{t=0}^{T-1} \norm{\pi(\traj{\expert}{x_0^{(i)}})-\expert(\traj{\expert}{x_0^{(i)}})}\,, 
\end{align} 
where $\Pi$ is a class of models that map the state space to the input space and $\norm{\cdot}$ is any norm (although it could be replaced by a more general loss function).
All our results assume that $\expert \in \Pi$. 

\paragraph{Distribution Shift:}The states collected using the expert $\expert$ have a particular distribution $\Dcal^\star$. BC produces a controller $\pibc$ that, when evaluated on samples from $\Dcal^\star$, behaves similarly to the expert $\expert$. However, $\pibc$ is not a perfect copy of the expert and hence the states encountered during its deployment have a different distribution than $\Dcal^\star$. This discrepancy is well known and leads to errors compounding in practice \citep{pomerleau1988alvinn,ross2010efficient, ross2011reduction}. More explicitly, consider an initial state $x_0$ sampled from $\Dcal$. Then, at the first time step $\pibc$ and $\expert$ perform similarly since $\pibc$ was trained using data sampled from $\Dcal$. However, at the second time step the distributions over states produced by $\pibc$ and $\expert$ are different, which means that at the second time step $\pibc$ would be evaluated on a distribution different than the one on which it was trained. Hence, with each time step, $\pibc$ can take the dynamical system to parts of the state space that are less and less covered by the training trajectories resulting in error compounding. 

Since BC does not account for the  intrinsic distribution shift in imitation learning, the number of training trajectories it requires to guarantee a good learned controller can be large (e.g.~exponential in the number of time steps or dimension). The methods for learning a MPC controller due to \citet{hertneck2018learning}, and \citet{karg2020efficient} are variants of behavior cloning and hence also suffer from the presence of distribution shift. Instead, we use and theoretically analyze the forward training algorithm that was initially used by \citet{ross2010efficient} for the tabular MDP setting.
	
\paragraph{Forward Training Algorithm:} \ref{forward} learns a time-varying feedback controller $\hat\pi_{0:T-1}$ in an inductive fashion: during stage $0$, it obtains $\hat \pi_0$ from the ERM \eqref{erm_0}. The controller $\hpi_0$ is used in the dynamical system just at the initial time step. Then, given already learned controllers $\hat \pi_0, \cdots, \hat \pi_{t-1}$, to learn the policy $\hat \pi_t$ for time step $t$, \forward{} samples states $\hat x^{(i)}_t = \varphi_t(x_0^{(i)}; \hat \pi_{0:t-1})$, where $x_0^{(1)},x_0^{(2)},\cdots$ are sampled i.i.d.~from the initial state distribution $\D$.

The advantage of this method is that at time step $t$ during deployment the controller $\hat\pi_t$ would be evaluated on the same distribution as that on which it was trained. Other recent works have also proposed learning inductively time-varying policies as a way to avoid distribution shifts \citep{mhammedi2020learning, sun2019provably}.

	\begin{figure}
		\begin{mdframed}
			\begin{alg}[{\citet{ross2010efficient}}] 
				\namedlabel{forward}{{\forward}} Given $n$ and $T$, a time-varying policy $\hat{\pi}_{0:T-1}$ is computed iteratively according to the following procedure:\\\\ 
	{\bf Stage $0$:}  Sample $n$ initial states $x_0^{(1)} , \cdots, x_0^{(n)}  \sim \mathcal{D}$ and solve the following ERM:
\begin{align}\label{erm_0}
					\hat\pi_0 \in \argmin_{\pi \in \Pi} \ \ \frac{1}{n}\sum_{i=1}^n \|\expert(x_0^{(i)}) -\pi(x_0^{(i)})\|.
				\end{align} 
{\bf Stage $t$:} Sample fresh initial states $x_0^{(1)} , \cdots, x_0^{(n_t)} \sim \mathcal{D}$, where $n_t \coloneqq c n t \lceil \ln^2(t+1)\rceil+n$ and $c\coloneqq \sum_{t=1}^{\infty}1/(t \ln^2 (t+1))$, then evaluate the states $\hat x^{(i)}_t \coloneqq \traj{\hpi_{0:t-1}}{x_0^{(i)}}$, using the  controllers $\hpi_{0:t-1}$ learned in previous stages. Then, select $\hpi_t$ s.t. 	
\begin{align} \label{erm_t}
					\hat\pi_t \in \argmin_{\pi \in \Pi} \ \ \frac{1}{n_t}\sum_{i=1}^{n_t} \|\expert(\hat x^{(i)}_t) -\pi(\hat x^{(i)}_t)\|.
			\end{align}
				
				Since $\expert$ is only defined on $\X$ and since $\hat x^{(i)}_t$ could lie outside $\X$, we define  $\expert(x) = \expert(\proj_\X  x)$.\\
			
{\bf Output:}  The time-varying controller $\hat \pi = \hpi_{0:T-1}$.
			\end{alg}
		\end{mdframed}
	\end{figure}
	 
\subsection{The Sample Complexity of Learning a Controller with \ref{forward}}

In this section, we discuss our statistical guarantees of the controllers produced by \ref{forward}. For simplicity, in this section we consider the setting without state constraints, i.e., $\X = \R^{d_x}$. Before we can state the main results of this section, we need to make an assumption on the class of controllers $\Pi$ used by \forward. 

\begin{assumption}\label{ass:policy}
		The model class $\Pi$ is a finite and contains $\expert$. Moreover, for any $\pi \in \Pi$ and any $x\in \X$ we have $\pi(x)\in \U$. 
	\end{assumption}
	
The second part of the assumption just guarantees that $\Pi$ enforces the input constraints. Any controller class can be modified to satisfy this property by projecting the outputs of the controllers onto $\U$. We assume that the controller class $\Pi$ is finite for simplicity. In this case, our sample complexity guarantees scale with $\ln |\Pi|$---a quantity that arises through a standard generalization bound. When $\Pi$ is not finite, one can replace $\ln |\Pi|$ by learning-theoretic complexity measures such as the Rademacher complexity. Finally, in the MPC application we care about, the assumption $\expert \in \Pi$ is easily satisfied. In the case of constrained linear dynamics with quadratic costs the optimal MPC controller is piecewise affine and it can be expressed as a neural network with ReLU activations as extensively discussed by  \citet[Section I-D]{karg2020efficient} (see also \citep{borrelli2017predictive}).


Now we are ready to state the main result of this section. Its proof relies on the empirical Bernstein inequality \citep{maurer2009empirical} and is deferred to  \autoref{sec:pf_lem_emp_bern}.
	\begin{theorem}
		\label{thm:emp_bern} 
		Let $T\geq 1$ be the target time step, $\delta\in(0,1)$, $n\geq 2$, and $\bdd{u} \coloneqq \sup_{u\in \U} \|u\|$. 
			Let $\hat x_t = \traj{\hpi_{0:t-1}}{x_0}$.
		When \autoref{ass:policy} holds, then under an event $\cE$ of probability at least $1-\delta$ $($over the randomness in the training process$)$, \ref{forward} produces a time-varying controller $\hat\pi_{0:T-1}$ that satisfies 
		\begin{align}
	\mathbb{E} \left[ \left. \|\expert( \hat x_t) -\hat\pi_t( \hat x_t)\| \right| \hat\pi_{0:T-1} \right] \leq   \frac{7 \bdd{u} \ln (2T|\Pi|/\delta)}{n_t}\,, \quad \forall t\geq 0, \label{eq:trainingsuccess0}
		\end{align}
		where $n_t\coloneqq c n t \lceil \ln^2(t+1) \rceil +n$, $c\coloneqq \sum_{t=1}^{\infty}1/(t \ln^2 (t+1))$, and the expectation in \eqref{eq:trainingsuccess0} is with respect to the randomness in the initial state.
	\end{theorem}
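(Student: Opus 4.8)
The plan is to analyze each stage $t$ in isolation, conditioning on the controllers $\hpi_{0:t-1}$ already learned in the previous stages, and then to stitch the stages together by a union bound. First I would observe that at stage $t$ the algorithm \ref{forward} draws \emph{fresh} initial states $x_0^{(1)},\dots,x_0^{(n_t)}\sim\D$ that are independent of the history producing $\hpi_{0:t-1}$. Hence, conditionally on $\hpi_{0:t-1}$, the rolled-out states $\hat x_t^{(i)}=\traj{\hpi_{0:t-1}}{x_0^{(i)}}$ are i.i.d.\ draws from exactly the same (conditional) distribution as the test point $\hat x_t=\traj{\hpi_{0:t-1}}{x_0}$ appearing on the left-hand side of \eqref{eq:trainingsuccess0}. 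This alignment of the training and test distributions at each stage is precisely the feature of \forward{} that removes distribution shift, and it is what lets me treat the stage-$t$ ERM as a standard i.i.d.\ generalization problem.

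Next I would record two elementary facts about the per-sample loss $\ell_\pi(x):=\norm{\expert(x)-\pi(x)}$. By \autoref{ass:policy} every $\pi\in\Pi$ (and $\expert$) maps into $\U$, so $\ell_\pi(x)\le\norm{\expert(x)}+\norm{\pi(x)}\le 2\bdd{u}$ for all $x$; thus the loss is bounded and I may rescale it to $[0,1]$ by dividing by $2\bdd{u}$. Moreover, since $\expert\in\Pi$ and $\ell_{\expert}\equiv 0$, the minimizer $\hpi_t$ of the stage-$t$ empirical risk \eqref{erm_t} necessarily attains empirical loss $0$; because the loss is nonnegative this forces $\ell_{\hpi_t}(\hat x_t^{(i)})=0$ for every $i$, and in particular the empirical variance of $\ell_{\hpi_t}$ over the sample also vanishes. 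This realizability is the key structural input that will upgrade the usual $1/\sqrt{n_t}$ rate to the fast $1/n_t$ rate.

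With these in place, the core step is to invoke the empirical Bernstein inequality of \citet{maurer2009empirical}. For a \emph{fixed} $\pi\in\Pi$ and fixed stage $t$, applied conditionally on $\hpi_{0:t-1}$ to the i.i.d.\ variables $\ell_\pi(\hat x_t^{(i)})/(2\bdd{u})\in[0,1]$, it bounds the population mean by the empirical mean, plus a term of order $\sqrt{\hat V_{n_t}\ln(2/\delta')/n_t}$, plus a term of order $\ln(2/\delta')/n_t$, where $\hat V_{n_t}$ is the empirical variance. I would then union bound over all $\pi\in\Pi$ and all stages $t\in\{0,\dots,T-1\}$ at the per-event level $\delta'=\delta/(T|\Pi|)$; the intersection of these events defines the high-probability event $\cE$ with $\Pr(\cE)\ge 1-\delta$ and produces the logarithmic factor $\ln(2T|\Pi|/\delta)$. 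On $\cE$ I specialize the uniform bound to the data-dependent choice $\pi=\hpi_t$: since both its empirical mean and empirical variance are $0$, the square-root term disappears entirely and only the $\ln(2/\delta')/n_t$ term survives, giving $\E[\,\ell_{\hpi_t}(\hat x_t)\mid\hpi_{0:t-1}]\le 7\bdd{u}\ln(2T|\Pi|/\delta)/n_t$ after undoing the rescaling and absorbing an absolute constant using $n_t\ge n\ge 2$. Finally, since the stage-$t$ quantity depends on the learned controllers only through $\hpi_{0:t}$, conditioning on the full $\hpi_{0:T-1}$ yields the same bound, which is exactly \eqref{eq:trainingsuccess0}.

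The main obstacle I anticipate is not any single calculation but getting the probabilistic bookkeeping exactly right: one must apply empirical Bernstein \emph{conditionally} on $\hpi_{0:t-1}$ so that the stage-$t$ samples are genuinely i.i.d.\ from the test distribution, while simultaneously making the bound uniform over the data-dependent minimizer $\hpi_t$ through the union bound over $\Pi$. The delicate part—and the step most prone to a constant-factor slip—is the realizable-case observation that the empirical variance vanishes, which is what collapses the empirical Bernstein bound to the fast rate $1/n_t$ rather than $1/\sqrt{n_t}$; verifying the final constant $7$ is then routine.
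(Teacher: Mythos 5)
Your proposal is correct and follows essentially the same route as the paper's proof: both apply the Maurer--Pontil empirical Bernstein inequality conditionally at each stage, use realizability ($\expert\in\Pi$, hence zero empirical loss and zero empirical variance for the ERM solution $\hpi_t$) to collapse the bound to the fast $1/n_t$ rate, and finish with a union bound yielding the $\ln(2T|\Pi|/\delta)$ factor. The only cosmetic difference is that you carry out the union bound over $\Pi$ by hand, whereas the paper invokes the class-uniform version of the inequality (Corollary 5 of \citet{maurer2009empirical}) in which that union bound is already built in.
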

	 This result guarantees that the time-varying controller learned by \forward{} is close in expectation to the optimal controller. The following corollary to \autoref{thm:emp_bern} bounds this difference with high probability using Markov's inequality (see \autoref{sec:pf_markov} for a proof):
	\begin{corollary}
	\label{cor:markov}
	Let $\delta\in(0,1)$, $n\geq 2$, and $\bdd{u} \coloneqq \sup_{u\in \U} \|u\|$. 
		Let $\hat x_t = \traj{\hpi_{0:t-1}}{x_0}$.
	When \autoref{ass:policy} holds, then under  the event $\cE$  of probability at least $1-\delta$ $($over the randomness in the training process$)$, \ref{forward} produces a time-varying controller $\hat\pi_{0:T-1}$ such that
	\begin{align}
\label{thm:control_affine}
			\Pr \left[\left. \forall t\geq 0,\ \   \norm{\expert( \hat x_t) - \hat \pi_t ( \hat x_t)} \leq \frac{ 14 \bdd{u}\ln (2T|\Pi|/\delta)}{n\delta} \right| \hat\pi_{0:T-1}\right] \geq 1 -\delta,
\end{align}
where the probability is with respect to the randomness in the initial state.
	\end{corollary}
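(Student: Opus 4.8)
The plan is to derive the uniform, high-probability bound \eqref{thm:control_affine} from the per-time-step expectation bound \eqref{eq:trainingsuccess0} of \autoref{thm:emp_bern} via Markov's inequality followed by a union bound over $t$. Throughout I would condition on the event $\cE$, so that the expectation bounds of \eqref{eq:trainingsuccess0} hold simultaneously for every $t\geq 0$; all probabilities and expectations below are then taken with respect to the randomness in the initial state, with $\hat\pi_{0:T-1}$ held fixed.

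First, for each fixed $t\geq 0$ the quantity $\norm{\expert(\hat x_t)-\hat\pi_t(\hat x_t)}$ is a nonnegative random variable whose conditional mean is at most $7\bdd{u}\ln(2T|\Pi|/\delta)/n_t$. Applying Markov's inequality with the \emph{single} threshold $\lambda \coloneqq 14\bdd{u}\ln(2T|\Pi|/\delta)/(n\delta)$, which does not depend on $t$, gives
\[
\Pr\left[\left.\norm{\expert(\hat x_t)-\hat\pi_t(\hat x_t)} > \lambda \right| \hat\pi_{0:T-1}\right] \leq \frac{7\bdd{u}\ln(2T|\Pi|/\delta)/n_t}{\lambda} = \frac{n\delta}{2 n_t}.
\]
I would then union bound these failure events over all $t\geq 0$, so the whole task reduces to showing that $\sum_{t\geq 0} \tfrac{n\delta}{2n_t}\leq \delta$, i.e.\ that the schedule $\{n_t\}$ makes the failure probabilities summable with the right constant.

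The key step is this summability estimate, and it is exactly where the design of the algorithm pays off. Using $n_0=n$ and $n_t = cnt\lceil \ln^2(t+1)\rceil + n$ for $t\geq 1$, together with $\lceil \ln^2(t+1)\rceil \geq \ln^2(t+1)$, I would bound
\[
\sum_{t=0}^\infty \frac{1}{n_t} \leq \frac{1}{n} + \sum_{t=1}^\infty \frac{1}{cnt\ln^2(t+1)} = \frac{1}{n} + \frac{1}{cn}\sum_{t=1}^\infty \frac{1}{t\ln^2(t+1)} = \frac{2}{n},
\]
where the final equality is precisely the definition $c = \sum_{t=1}^\infty 1/(t\ln^2(t+1))$. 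Hence $\sum_{t=0}^\infty \tfrac{n\delta}{2n_t} \leq \tfrac{n\delta}{2}\cdot\tfrac{2}{n} = \delta$, and the union bound yields that, conditionally on $\hat\pi_{0:T-1}$ and on $\cE$, the inequality $\norm{\expert(\hat x_t)-\hat\pi_t(\hat x_t)} \leq \lambda$ holds simultaneously for all $t\geq 0$ with probability at least $1-\delta$, which is exactly \eqref{thm:control_affine}.

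The Markov and union-bound steps are routine; the only part that genuinely uses the structure of \ref{forward} is the convergence of $\sum_t 1/n_t$, so the main thing I would verify carefully is that the schedule $n_t = cnt\lceil\ln^2(t+1)\rceil + n$ and the normalizer $c$ indeed force this series to $2/n$. I would also double-check the two constants that make the bound close exactly: the $t=0$ term contributes $\tfrac{n\delta}{2n_0}=\delta/2$ (since $n_0=n$) while the tail contributes the remaining $\delta/2$, and the factor $14 = 2\times 7$ in $\lambda$ is precisely what converts the $7/n_t$ expectation bound into the summable tail $\tfrac{n\delta}{2n_t}$.
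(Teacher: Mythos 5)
Your proof is correct and takes essentially the same approach as the paper's: conditioning on $\cE$, applying Markov's inequality at each time step with the fixed threshold $14\bdd{u}\ln(2T|\Pi|/\delta)/(n\delta)$, and union bounding using the summability of $\sum_t 1/n_t$, which holds precisely by the definition of the normalizer $c$. The paper organizes the identical computation by splitting the $t=0$ term (contributing $\delta/2$) from the $t\geq 1$ tail (contributing $\delta/2$), which matches your accounting exactly.
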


\paragraph{Infinite Model Classes:}	The results presented in this section assume $\Pi$ is finite for simplicity. This assumption can be easily relaxed. For example, to get an analogue of the result of \autoref{thm:emp_bern} for a infinite class $\Pi$, one can use the empirical Bernstein inequality \citep[Lemma 6]{maurer2009empirical}, which replaces $\ln |\Pi|$ by the logarithm of a ``growth function'' for the class $\Pi$ (see  \autoref{app:nonfinite}). In the case where $\Pi$ is a class of ReLU Neural Networks, the latter quantity can be bounded by $\widetilde O(N_{\sf params})$, where $N_{\sf params}$ is the number of parameters of the Neural Networks in $\Pi$.

\paragraph{Trajectory Guarantees:} \autoref{thm:emp_bern} guarantees that \forward{} produces a controller $\hpi$ that generates inputs to the system that are close to those outputted by $\expert$. However, this result does not immediately imply that $\hpi$ and $\expert$ follow similar trajectories  (errors could compound over time, causing $\hpi$'s trajectories to diverge from those of $\expert$).
Following the main ideas of \citet{tu2021sample} and \citet{pfrommer2022taylor}, one can in fact show guarantees in terms of trajectories when the closed-loop system under $\expert$ is robust in an appropriate sense. 
See  \autoref{sec:thm_traj} the details.

In subsequent sections, we refine the results presented so far to the case of MPC. 
 
\section{Background on the Control of Linear Systems}	 \label{intro:mpc}
	 
In this section, we review some background material on the control of linear systems, with a focus on the linear quadratic regulator (LQR) and on MPC. This section is not intended to be exhaustive; we only cover the notions needed in this work. We consider the  linear dynamical system
\begin{align} \label{def:linear}
x_{t+1} =A x_t + B u_t, 
\end{align}
 which clearly maps to the control-affine setting \eqref{def:control_affine} with $f(x_t) = Ax_t$ and $g(x_t)=B$.

\paragraph{Unconstrained Optimal Control:} Suppose we desire to solve the following infinite horizon optimal control problem
\begin{align}  \tag{\sf LQR}\label{eq:lqr}
		\begin{split}
			&\underset{ u_0,u_1,u_2\cdots}{\text{minimize}}  ~~  \sum_{t=0}^{\infty}{x_t^\top Q x_t + u_t^\top R u_t}   \\
			&\text{subject to}  ~~x_{t+1} = Ax_t + Bu_t,
		\end{split}
\end{align}  
where $R \succ 0$ and $Q \succeq 0$. Then, the optimal controller can be computed in closed form. More specifically,  let $\plqr$ be the unique positive definite solution to the discrete algebraic Riccati equation 
	\begin{align}\label{pare}
		P= A^\top  P A - A^\top P B(R + B^\top 
		PB)^{-1} B^\top P A+ Q\,.
	\end{align}
Then, the optimal controller for \eqref{eq:lqr} is given by
\begin{align}\label{dare}
		\dlqr(x)= \klqr x, \quad \text{ where}~\klqr = - (R + B^\top \plqr B)^{-1} B^\top \plqr A\,.
\end{align}
One important property of $\dlqr$ is that the closed-loop system induced by the controller is stable. In other words, if $A_{\klqr}\coloneqq A+B\klqr$, we have $\rho(A_{\klqr})<1$.

\paragraph{Constrained Linear Dynamics and MPC:} Suppose we wish to design a controller for the dynamics \eqref{def:linear} such that $x_t \in \X$ and $u_t \in \U$ for all $t \geq 0$ and suppose we still wish to minimize the quadratic cost shown in \eqref{eq:lqr}. However, solving 
an infinite horizon problem under the constraints 
$x_t \in \X$, $u_t \in \U$ is computationally challenging. Moreover, it is not sufficient to find an optimal sequence of inputs $\{u_t\}$ because open-loop control is brittle in the presence of noise. 

MPC precisely resolves these issues by designing a {\bf feedback controller} using the following {\bf finite horizon} $N$-step  optimal control problem.
For any given initial state $x \in\X$ and a sequence of control inputs $ \vu = (u_0,u_1,\cdots,u_{N-1})$, 
\begin{align} \label{def:VN}
    V_N(x, \vu) : = \sum_{k=0}^{N-1}{x_k^\top Q x_k + u_k^\top R u_k}  +  x_N^\top \Pter x_N\,,\quad \text{s.t.}~x_{t+1} = Ax_t + Bu_t,~x_0=x,
\end{align}   
where $\Pter, Q \succeq 0$ and $R \succ 0$.
Then, the finite horizon problem is:
\begin{align}  \tag{\sf  MPC}\label{eq:mpc}
	 \underset{ \vu = (u_0,u_1,\cdots,u_{N-1})}{\text{minimize}}  \left\{ V_N(x,\vu) ~|~ x_t \in \X,u_t \in \U~\forall t,~\text{and}~ x_N\in \Xter\right\}\,,
\end{align} 
where $\X,\U,\Xter$ are constraint sets that are closed and contain the origin.

When the constraint sets are convex the finite-horizon problem is a convex optimization problem that can be solved efficiently.
Let $\ustar_0(x), \ustar_1(x), \dots, \ustar_{N-1}(x)$ be the optimal solution to \eqref{eq:mpc}.
To obtain a feedback controller, instead of deploying all inputs $\ustar_0(x), \ustar_1(x), \dots, \ustar_{N-1}(x)$, \ref{eq:mpc} only deploys the first input $\ustar_0(x)$. Then, it observes the next state of the system and solves another $N$-step problem starting at the new state.
In particular, given the initial state $x_0$, the MPC controller is defined as $\mpc(x_0)\coloneqq\ustar_0(x_0)$. After controlling the system for a single step using $\ustar_0(x_0)$, and say that the next state is $x_1$, \ref{eq:mpc} then resolves another $N$-step finite horizon problem starting from $x_1$ and use $\ustar_0(x_1)$ and so on.
Since the  window of time over which the finite horizon problem is solved is shifting to the right by one at each step, \ref{eq:mpc} is also referred to as \emph{receding horizon control (RHC}). We refer readers to textbooks
(e.g., \citet{morari1999model,rawlings2017model,borrelli2017predictive}) for extensive background.

MPC is a popular and successful control strategy because it can systematically handle multi-input-multi-output systems, nonlinearities, as well as constraints. The main drawback of MPC is that MPC must solve an optimization problem at each time step.
For this reason, traditional applications were limited to slow systems such as chemical processes~\citep{qin2003,rawlings2009}.

 Now let us discuss the feasible domain of the optimization problem \eqref{eq:mpc}. See, e.g., \citep[Ch. 10-12]{borrelli2017predictive} for more context and details. 
\begin{definition}[Feasible Domain of \ref{eq:mpc}] \label{def:feasible}
Let $\X$, $\U$ be constraint sets and $\Xter$ be a terminal set and recursively define the set $\X_N,\X_{N-1},\dots, \X_0$ as $\X_N = \Xter$, and  $\X_i \coloneqq \{x \in \X~:~ \exists u \in\U~\st~Ax+Bu\in \X_{i+1} \}$, for $i=N-1,N-2,\dots, 0$.   
Then, $\X_0$ is the feasible domain of \eqref{eq:mpc} w.r.t.~$(\X,\U,\Xter)$.
\end{definition}

\paragraph{Persistent Feasibility and Stability:}
Ensuring the optimization problems \eqref{eq:mpc} are feasible at each time step and ensuring that MPC stabilizes the underlying dynamics requires careful arguments. Merely having $x_0 \in \X_0$ does not necessarily imply persistent feasibility, and a careful choice of terminal cost $\Pter$ and terminal constraint  $\Xter$ has to be made. Here, persistent feasibility means that if $x_0 \in \X_0$ then $\varphi_t(x_0;\mpc)\in \X_0$, for all $t\geq 1$.

A sufficient condition for  persistent feasibility is to choose $\Xter$ as a control invariant set \citep[Theorem   12.1]{borrelli2017predictive}, and for concreteness, we consider the set
that is invariant with respect to the LQR controller $\dlqr$ as below. 
 \begin{definition}[Positive Invariance w.r.t.~LQR Controller] \label{def:pos_inv}
 Let $\dlqr(x)=\klqr x$ denote the unconstrained LQR controller. We say that a set $\mathcal{O}$ is \emph{positively invariant} with respect to $(\X,\U)$, if $\mathcal{O} \subseteq \X$ and whenever $x_0\in \mathcal{O}$, $x_t\in \mathcal{O}$ and $\dlqr(x_t)=\klqr x_t \in \U$, for all $t\geq 0$, where $x_{t+1} = (A+B\klqr) x_t$.
    Let $\Olqr(\X,\U)$ be the \emph{maximal positively invariant} set with respect to $(\X,\U)$.
\end{definition}
\noindent As detailed in \citet[Sec.~10.2]{borrelli2017predictive}, the maximal positively invariant set $\Olqr(\X,\U)$ (or its polytopic inner approximations) can be computed using polytopic computations.

Finally, we discuss a sufficient condition for the stability of the dynamics in feedback with MPC.  It is well known that the MPC controller  stabilizes the system (i.e.~$\norm{x_t}\to 0$ as $t\to \infty$) if it uses a Control Lyapunov Function (CLF) $\pter$ as the terminal cost, where $\pter$ is a CLF if 
\begin{align} \label{cond:control_lya}
   \min_{u\in \U,~ Ax+Bu\in\Xter } \left(\pter(Ax+Bu)-\pter(x)+\ell(x,u)\right)\leq 0,\quad  \forall x\in \Xter,
\end{align} 
where $\ell(x,u) = x^\top Qx+u^\top R u$ denotes the stage cost. The MPC objective in \eqref{def:VN} has the function $\pter(x)=x^\top \Pter x$ as the terminal cost, which can be made to fulfill \eqref{cond:control_lya} by choosing $\Pter = \plqr$ and $\Xter=\Olqr(\X,\U)$.
With these choices, we also have the following useful property that we use later:
\begin{align} \label{lqr_tinf}
    \mpc(x) =\dlqr(x)\quad\text{whenever}~x\in \Olqr\,.
\end{align}
See, e.g., \cite[Sec.~2.5.4]{rawlings2017model} for details. At a high level, when $x\in \Olqr$, the inputs produced by the LQR controller $\dlqr$ correspond to the optimal solution of \eqref{eq:mpc} since they are the optimal solution to the unconstrained objective $V_N$ thanks to the choice $\Pter=\plqr$ and the positive invariance of $\Olqr$---see \autoref{def:pos_inv}.
Now, we are ready to discuss the main method and result of this work. 

\section{On-policy Imitation Learning for MPC}
\label{sec:forward-switch}

In this section, we discuss how to adapt \forward{} to imitate MPC and we offer refined guarantees on the performance of the modified \forward{} method. We consider the dynamics, where unlike in \autoref{sec:forward}, we allow $\X\subsetneq\R^{d_x}$:
	\begin{align*}
		x_{t+1}\coloneqq Ax_{t} + Bu_{t} \,, \quad x_t \in \X,~u_t \in \U \,. 
	\end{align*}
All imitation learning methods need a choice of model class $\Pi$. We follow \citet{karg2020efficient} and choose $\Pi$ to be a class of neural networks with ReLU activations. This choice is appropriate because MPC implements a piecewise affine controller with the different pieces supported on polytopic regions when the constraints on the dynamics are polytopes. The main challenge in precomputing the piecewise affine controller implied by MPC is that the number of polytopic regions is exponential in the horizon $N$ and other problem dependent terms. However, polynomially many parameters are sufficient in order to express the MPC controller as a ReLU NN \cite{karg2020efficient}.

In order for \ref{forward} to imitate MPC efficiently, we modify it in the following ways:
\bli
\item {\bf Robust MPC as the Expert Policy:}  A guarantee of the form $\norm{\expert( \hat x_t) - \hat \pi_t ( \hat x_t)} \leq \eps$ does not necessarily ensure that the learned policy stabilizes the system: $\eps$-deviations from the expert at each iteration can compound and lead to instability. To mitigate this issue, we use robust MPC \emph{\`{a} la} \citet{mayne2005robust} as the expert, which we detail in \autoref{sec:robust_MPC}. \citet{hertneck2018learning} also chose robust MPC as the expert, but their method is a form of behavior cloning that requires an extra validation step. 

\item {\bf Sample-efficient Implementation:} \forward{} learns a time-varying controller, which allows it to elude the challenge of distribution shift. However, as explained by \citet{ross2010efficient}, learning a time-varying controller implies that the  sample complexity grows with the number of stages $T$. Therefore, with a straightforward application of \forward{} it would not be possible to stabilize a dynamical system over an infinite horizon. To address this drawback, we use an insight of \citet{sznaier1987suboptimal}. Namely, after using MPC for a certain a number of time steps the state of the dynamics reaches a region on which MPC and the infinite horizon LQR agree. Therefore, our version of \forward{} estimates the number of time steps to switch to the time-invariant LQR controller. 
\eli	
 
\noindent We refer to the modified method as  \ref{forward-switch}, and we present its performance guarantees. 

	\subsection{Robust MPC as the Expert Policy}
	\label{sec:robust_MPC}
Before discussing the theoretical guarantees of our learned controller, we first review  the robust MPC method that we use as the expert $\expert$. Although we consider noiseless dynamics, it is useful to introduce disturbances in order to account for the errors introduced by the learned controller. Robust MPC is a controller that is robust against disturbances $w\in \W$ at each step, where $\W$ is a compact set. The robust MPC controller proposed by \citet{mayne2005robust} differs from standard MPC in two ways: 
	\bli
		\item robust MPC shrinks the constraint sets $\X$ and $\U$ in order to account for the disturbances,
		\item robust MPC takes the first input produced by the MPC optimization problem and it linearly interpolates it with a stabilizing controller:  
	\eli
Before we can discuss the details of robust MPC, we introduce the following notion: 
	\begin{definition}[Disturbance Invariant Set {\citep{kolmanovsky1998theory}}] \label{def:disturbance_inva}
		Given a compact disturbance set $\W$, we say that $\inva$ is a disturbance invariant set if it is a neighborhood around the origin that satisfies $A_{\klqr}\inva +\W \subseteq \inva$, where $A_{\klqr}\coloneqq A+B\klqr$ (recall that $\rho(A_{\klqr})<1$). 
	\end{definition}
\noindent\citet[Section 4]{kolmanovsky1998theory} show that the minimal disturbance invariant set is \begin{align} \label{eq:min_inv}
		\invamin \coloneqq \sum_{k=0}^\infty A_{\klqr}^k \W \,.
	\end{align}
In our case, we let $\W$ be the ball of radius $\eps>0$ centered at the origin, i.e.~$\W= \B(\eps)$.
	Below, we estimate the radius of $\invamin$ based on the fact that $A_{\klqr}$ is stable.
	Since $A_{\klqr}$ is stable, there exists $\rho \in (\rho(A_{\klqr}),1)$ and $\tau>0$ such that $\norm{A_{\klqr}^k} \leq \tau \cdot \rho^k$ for all $k$ (see, e.g., \citep[eq (3)]{mania2019certainty}).
	
	\begin{claim} \label{claim:inva}
		For $\eps>0$ let $\cW=\B(\eps)$.
		Then,  $\invamin \subseteq   \B (\kappa \cdot\eps)$ with $\kappa\coloneqq \frac{\tau}{1-\rho}$.
	\end{claim}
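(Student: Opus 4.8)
The plan is to bound the norm of an arbitrary point of $\invamin$ directly from the series representation \eqref{eq:min_inv}, rather than reasoning about the invariance condition $A_{\klqr}\inva+\W\subseteq\inva$. First I would recall that the infinite Minkowski sum $\sum_{k=0}^\infty A_{\klqr}^k\W$ is, by definition, the set of all points $z$ admitting a representation $z=\sum_{k=0}^\infty A_{\klqr}^k w_k$ with $w_k\in\W$ for every $k\geq 0$. Hence it suffices to show that every such $z$ satisfies $\norm{z}\leq\kappa\eps$, since this places the whole set inside $\B(\kappa\eps)$.

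Fixing one such representation, the triangle inequality together with submultiplicativity of the operator norm gives $\norm{z}\leq\sum_{k=0}^\infty\norm{A_{\klqr}^k}\,\norm{w_k}$. Now I would plug in the two available bounds: since $w_k\in\W=\B(\eps)$ we have $\norm{w_k}\leq\eps$, and by the choice of $\rho\in(\rho(A_{\klqr}),1)$ and $\tau>0$ recalled just before the claim, we have $\norm{A_{\klqr}^k}\leq\tau\rho^k$ for all $k$. Substituting these and summing the resulting geometric series yields $\norm{z}\leq\tau\eps\sum_{k=0}^\infty\rho^k=\tau\eps/(1-\rho)=\kappa\eps$, which is precisely the assertion $\invamin\subseteq\B(\kappa\eps)$ with $\kappa=\tau/(1-\rho)$.

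The only point requiring any care is convergence of the series, and this is where the hypothesis $\rho<1$ does double duty: because $\sum_k\tau\rho^k$ converges, every formal series $\sum_k A_{\klqr}^k w_k$ with $w_k\in\W$ converges absolutely, so the set $\invamin$ is genuinely well defined, and the same computation that certifies convergence also produces the finite bound $\kappa\eps$. I do not expect a substantive obstacle here; once the series representation of the Minkowski sum is in hand, the estimate is essentially a one-line application of geometric decay.
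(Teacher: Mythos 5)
Your proof is correct and follows essentially the same argument as the paper's: both rest on the bound $\norm{A_{\klqr}^k}\leq\tau\rho^k$, the radius bound $\norm{w_k}\leq\eps$ for $w_k\in\W$, and summing the resulting geometric series to get $\kappa\eps=\tau\eps/(1-\rho)$. The only difference is presentational---you bound an arbitrary element of the Minkowski sum pointwise, while the paper encloses each set $A_{\klqr}^k\W$ in the ball $\B(\tau\rho^k\eps)$ and sums the balls---which amounts to the same computation.
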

	\begin{proof}
		From the fact $\norm{A_{\klqr}}^k \leq \tau \cdot \rho^k$ for all $k$, it follows that $\norm{A_{\klqr}^k \W } \leq \tau \rho^k \eps$ for all $k$.
		This implies that $A_{\klqr}^k \W \subset \B(\tau \rho^k \eps)$.
		Thus,
		$\invamin \subseteq \sum_{k=0}^\infty \B(\tau \rho^k \eps) = \B(\frac{\tau}{1-\rho}\cdot \eps)$.  
	\end{proof}
\noindent In light of  \autoref{claim:inva}, here and below  readers can consider 
\begin{align}
    \W = \B(\eps) \quad\text{and}\quad \inva=\B(\kappa\cdot \eps).
\end{align}
Now, for a given state $\xinit$, let us consider \ref{eq:mpc} with slightly stricter constraints.
We begin with a notation: for two arbitrary sets $\X$ and $\mathcal{Y}$, $\X \ominus \mathcal{Y}$ is defined as $\mathcal{X}\ominus \mathcal{Y} \coloneqq\{x \in \X \,|\,x+\mathcal{Y}\subseteq \mathcal{X}\}$.
With this notation, we consider the following  constraint sets, terminal set, and the positive invariant set:
\bli
\item $\Xref \coloneqq \X \ominus \inva$ and $\Uref \coloneqq  \U\ominus \klqr \inva$.
\item $\Xterref$ is chosen as $\Olqrref\coloneqq\Olqr(\Xref,\Uref)$, where $\Olqr(\Xref,\Uref)$ is the maximal positive invariant set with respect to $(\Xref, \Uref)$---see \autoref{def:pos_inv}. 
\eli 
 Using the new constraints and the terminal set, consider 
\begin{align}   \tag{$\overline{\sf  MPC}$}\label{eq:mpcref}
	 \underset{ 
	 \vu=(u_0,\dots,u_{N-1})}{\text{minimize}}  \left\{ V_N(x,\vu) ~|~ x_t \in \Xref  , ~ u_t \in\Uref~,\forall t,~\text{and}~ x_N\in \Xterref \right\}\,.
\end{align} 
where $V_N$ is defined in \eqref{def:VN}.   
The set of initial states $\Xzeroref$ for which \ref{eq:mpcref} admits a solution is the feasible domain of \ref{eq:mpc} with respect to the constraint sets $(\Xref,\Uref, \Xterref)$---see \autoref{def:feasible}. 
Let $\piref$ be the MPC controller defined by \ref{eq:mpcref}; that is, for each $x \in \Xzeroref$, $\piref(x)$ is given by 
\begin{align*}
    \piref(x) \coloneqq   \uref_0(x),\quad \text{where $(\uref_0(x),\uref_1(x),\dots, \uref_{N-1}(x))$ is the optimal solution to \ref{eq:mpcref}.}
\end{align*}
Henceforth, we assume that the initial state distribution $\D$ is almost surely supported within the feasibility set $\Xzeroref$.

Then, the key idea of \citet{mayne2005robust} is to include the initial point $x_0$  as a parameter of the optimization problem: given $x \in (\X_0 \oplus \inva) \cap \X$,
	\begin{align}   \tag{{\sf RMPC}} \label{eq:mpc_robust}
	\underset{ 
	 x_0,\vu }{\text{minimize}}  \left\{ V_N(x_0,\vu) ~|~ \text{constraints of \ref{eq:mpcref} and }  x \in x_0 \oplus \inva \right\}\,. 
\end{align} 
Letting $\xref_0(x), \uref_0(x),\uref_1(x),\dots, \uref_{N-1}(x)$ be the optimal solution to \ref{eq:mpc_robust}, the robust MPC controller of \citet{mayne2005robust} is defined as  
\begin{align}
    \rmpc (x) \coloneqq \uref_0(x)+ \klqr (x_0- \xref_0(x)) =  \piref(\xref_0(x))+ \klqr (x_0- \xref_0(x))\,.
\end{align}
The following result establishes a key property of the robust MPC controller $\rmpc$. 
We include the proof in \autoref{pf:mayne} for completeness.
\begin{proposition}[{\citep[Proposition 3 and Theorem 1]{mayne2005robust}}]\label{thm:mayne}
For any $\xinit \in (\X_0 \oplus \inva) \cap \X$, the robust MPC controller $\rmpc$ robustly stabilizes the system with disturbances
\begin{align} \label{exp:sys_disturb}
    	x_{t+1}\coloneqq Ax_{t} + Bu_{t} +w_t\,, \quad x_0=\xinit,~x_t \in \X,~u_t \in \U,~ w_t\in \W \,, 
\end{align}
in the sense that there exists a constant $\zeta\in(0,1)$ such that $\norm{\xref_0(x_t)} = O(\zeta^t \norm{\xref_0(\xinit)})$ for all $t\in \N$. 
\end{proposition}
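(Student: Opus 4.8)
The plan is to use the optimal value of \ref{eq:mpc_robust} as a Lyapunov function for the \emph{nominal} state and to show it contracts geometrically. Fix the true state $x$ and write $z_0\coloneqq\xref_0(x)$ for the optimal nominal initial state returned by \ref{eq:mpc_robust}, let $\uref_0(x),\dots,\uref_{N-1}(x)$ be the accompanying optimal nominal inputs, and let $z_0,z_1,\dots,z_N$ be the induced nominal trajectory $z_{k+1}=Az_k+B\uref_k(x)$. Define $\Vref(x)\coloneqq V_N(z_0,(\uref_0(x),\dots,\uref_{N-1}(x)))$. The first step is to show that the error $e_t\coloneqq x_t-\xref_0(x_t)$ stays inside the tube $\inva$. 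Since the applied input is $u_t=\uref_0(x_t)+\klqr e_t$, one step of the disturbed dynamics \eqref{exp:sys_disturb} gives $x_{t+1}-z_1=A_{\klqr}e_t+w_t$, which lies in $\inva$ by the defining property $A_{\klqr}\inva+\W\subseteq\inva$ of \autoref{def:disturbance_inva}, using that $e_t\in\inva$ is enforced by the constraint $x_t\in\xref_0(x_t)\oplus\inva$ of \ref{eq:mpc_robust}.

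Second, I would establish recursive feasibility by writing down an explicit suboptimal candidate at $x_{t+1}$. The previous nominal tail $z_1,\dots,z_N$ with inputs $\uref_1(x_t),\dots,\uref_{N-1}(x_t)$, augmented by the terminal LQR step $z_{N+1}=A_{\klqr}z_N$ with input $\klqr z_N$, is feasible for \ref{eq:mpcref}: the shifted states obey the tightened constraints by feasibility of the previous solution, and the appended step stays in $\Xterref=\Olqrref$ with $\klqr z_N\in\Uref$ because $\Olqrref$ is positively invariant with respect to $(\Xref,\Uref)$ under the LQR controller (\autoref{def:pos_inv}). The computation of the first step shows $x_{t+1}\in z_1\oplus\inva$, so taking $z_1$ as the nominal initial state makes this candidate feasible for \ref{eq:mpc_robust} at $x_{t+1}$; this also certifies $x_{t+1}\in(\Xzeroref\oplus\inva)\cap\X$ and closes the induction for tube invariance.

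Third, I would extract a Lyapunov decrease from the candidate. Bounding $\Vref(x_{t+1})$ by the candidate cost, the latter equals $\Vref(x_t)$ minus the dropped first stage cost $\ell(z_0,\uref_0(x_t))$ plus a terminal correction $\ell(z_N,\klqr z_N)+(A_{\klqr}z_N)^\top\Pter(A_{\klqr}z_N)-z_N^\top\Pter z_N$ that is nonpositive by the control-Lyapunov property \eqref{cond:control_lya} of the terminal cost under the choice $\Pter=\plqr$. This yields $\Vref(x_{t+1})\le\Vref(x_t)-\ell(z_0,\uref_0(x_t))\le\Vref(x_t)-\lmin(Q)\norm{\xref_0(x_t)}^2$ (invoking $Q\succ0$, or detectability of $(A,Q^{1/2})$ in the standard way). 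Combining the lower bound $\Vref(x)\ge\lmin(Q)\norm{\xref_0(x)}^2$ (all stage costs are nonnegative) with a matching quadratic upper bound $\Vref(x)\le c\,\norm{\xref_0(x)}^2$ gives the strict contraction $\Vref(x_{t+1})\le(1-\lmin(Q)/c)\,\Vref(x_t)$, hence $\Vref(x_t)\le\gamma^t\Vref(x_0)$ with $\gamma\coloneqq 1-\lmin(Q)/c\in(0,1)$. Feeding this back through the two quadratic bounds gives $\norm{\xref_0(x_t)}^2\le(c/\lmin(Q))\,\gamma^t\norm{\xref_0(x_0)}^2$, which is the claim with $\zeta=\sqrt\gamma$.

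The step I expect to require the most care is the quadratic upper bound $\Vref(x)\le c\,\norm{\xref_0(x)}^2$ on the feasible set. It holds with $c=\lmax(\plqr)$ whenever $\xref_0(x)\in\Olqrref$, where the nominal problem coincides with unconstrained LQR (the tightened-constraint analogue of \eqref{lqr_tinf}), but extending it to all of $\Xzeroref$ requires exploiting the piecewise-quadratic structure of $\Vref$ together with compactness/continuity and the fact that $\Vref$ vanishes only at the origin; alternatively one restricts to a sublevel set of $\Vref$, which the decrease property renders invariant, and argues the bound there. The remaining ingredients---the tube inclusion and the recursive-feasibility candidate---are the standard Mayne--Seron--Rakovi\'c constructions and amount to bookkeeping once the candidate solution is written down.
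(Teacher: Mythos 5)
Your proposal is correct and follows essentially the same route as the paper's proof: tube invariance of the error $x_t-\xref_0(x_t)$ via the disturbance-invariant set, recursive feasibility via the shifted candidate sequence appended with a terminal LQR step, a Lyapunov decrease using the control-Lyapunov property of the terminal cost $\Pter=\plqr$, and quadratic upper/lower bounds on the nominal value function yielding geometric contraction of $\norm{\xref_0(x_t)}$. If anything, you fill in two details the paper glosses over, namely the explicit terminal-cost telescoping behind the decrease inequality and the fact that the quadratic upper bound $\Vmpcref(\xref_0(x)) \leq C\norm{\xref_0(x)}^2$ requires justification beyond $\Olqrref$ (the paper simply asserts it as a property of the value function).
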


\begin{remark}[Time Step to Reach Positive Invariance] \label{rmk:tstar}
Note that the conclusion $\norm{\xref_0(x_t)} = O(\zeta^t \norm{\xref_0(\xinit)})$ holds for any choices of disturbances $\{w_t\}$ as long as $w_t \in \W$, for all $t\in[T]$. 
Hence, for such disturbances, as long as the support of $\D$ is almost surely bounded, there must exist $\tstar$ such that $\xref_0(x_{\tstar}) \in \Olqrref \ominus \inva$. 
Note that $\tstar$ depends solely on the system parameters and can be regarded as an absolute constant.
Then, since $x_{\tstar} \in \xref_0(x_{\tstar})\oplus \inva$, it follows that $\forall \{w_t\}\in \W$, $x_{\tstar} \in \Olqrref$.   
 \end{remark}
Next, we use the robust MPC and propose an efficient implementation of \forward{}.

\subsection{\ref{forward-switch}: Efficient Application of \ref{forward} to  MPC}
	\label{sec:main_MPC}

\begin{figure}
\begin{mdframed}
			\begin{alg2}  
		\namedlabel{forward-switch}{\algmpc}  
$T=$ Imitation learning times steps. $\tinf$ is initialized as $\tinf=T$. We use \ref{forward} with $\expert$ chosen as the robust MPC controller $\rmpc$ to learn $\tmpc_{0:T-1}$ as per the following procedure:  \\

{\bf Forward training until positive invariance:} At the end of each stage of \ref{forward}, say the $(t-1)$-th stage, we sample $\ell$ trajectories according to our learned controller $\hpi_{0:t-1}$ to generate $\hat x^{(i)}_{t}$, $i=1,2,\dots,\ell$.
\begin{list}{{\tiny $\blacksquare$}}{\leftmargin=1.0em}
\setlength{\itemsep}{-1pt}
    \item[-] If $\hat x^{(i)}_{t}\in \Olqrref$ for all $i=1,2,\dots,\ell$, then we terminate \forward{} early and set $\tinf=t$.
    \item[-] Otherwise, proceed to the next stage. 
\end{list}
 
{\bf Output policy:} Output a time-varying policy $\tmpc_{0:T-1}$ defined as
\begin{align}   \label{switch}
		\tmpc_t \coloneqq \left\{\begin{array}{ll} \hmpc_t, & \text{if }t <   \tinf,\\ 
		\dlqr, & \text{if }\tinf\leq t\leq T-1   \,,  \end{array}  \right.
	\end{align} 
where $\dlqr$ is the unconstrained infinite horizon LQR controller defined in \eqref{dare}. 
\end{alg2}
\end{mdframed}
	\end{figure} 
We are finally ready to formally present the modified \forward{} method---\ref{forward-switch}. 
As explained by \citet[Section 3]{ross2010efficient}, the main limitation of \ref{forward} is that the number of stages increases with the horizon length $T$. 
We modify the method so that with high probability we only need $\tstar$ stages, where $\tstar$ is the time step required for \ref{eq:mpc_robust} to reach $\Olqrref$ (see \autoref{rmk:tstar}) that is independent of $T$. The main idea is that after at most $\tstar$ steps, the robust exponential stability of the MPC controller ensures that states enter the positively invariant set $\Olqrref$ under any sequence of $\eps$-bounded disturbances. Whenever the state enters the positively invariant set $\Olqrref$, due to the choice of the terminal cost $\Pter = \plqr$, we know that the MPC controller $\piref$ coincides with $\dlqr$, which can be computed explicitly and stored efficiently, and so there is nothing more to learn. This insight, which goes back to \citet{sznaier1987suboptimal}, was in fact already used in the control literature to come up with an efficient algorithm for computing the constrained LQR controller~\citep{chmielewski1996constrained,scokaert1998constrained,grieder2004computation}. 

To implement this idea, \ref{forward-switch} must first estimate the number of steps the learned controller $\hmpc$ requires to drive the state to $\Olqrref$.
Given that \ref{forward} learns the controller incrementally for each time step, one can estimate the number of steps $t$ by checking if all the states at stage $t$ from the generated trajectories have reached $\Olqrref$. 
The next theorem justifies the step \eqref{switch} of  \ref{forward-switch} that switches the learned controller $\hmpc_t$ to $\dlqr$ for $t\geq \tinf$: we show that with high probability, it holds that $\tinf \leq \tstar$ and $\hat x_{\tinf}$ lies in $\Olqrref$ (in which the expert policy is indeed $\dlqr$). The proof of the next theorem relies on \autoref{thm:mayne} and \autoref{cor:markov}. The full details can be found in \autoref{pf:stability}.  
\begin{theorem}	\label{thm:stability}
	Let $\delta, \varepsilon\in(0,1)$ and $\bdd{u} \coloneqq \sup_{u\in \U} \|u\|$. Suppose  \autoref{ass:policy} holds and that the support of $\D$ is almost surely bounded. Choose $n\geq \frac{14\ell\norm{B}\bdd{u} \ln (2T\ell |\Pi|/\delta)}{\eps \delta}$ and $\ntraj$ such that $\ntraj \geq \frac{10\ln (T/\delta)}{\delta}$.
Then, under an event  $\succE$ of probability at least $1-3\delta$ $($over the randomness in the training process$)$, the stopping time $\tinf$ in \ref{forward-switch} satisfies $\tinf \leq \tstar$ ($\tstar$ defined in \autoref{rmk:tstar}) and
\begin{subequations}
\label{eq:gar}
\begin{gather}
 \Pr[\hat x_\tinf \in \Olqrref\mid \tinf, \hmpc]\geq 1 -\delta,  \\
\Pr\left[\left. \forall t = 0,\dots,\tinf-1, \   \norm{\rmpc( \hat x_t) - \hmpc_t ( \hat x_t)} \leq \eps/\|B\|\ \right|\tinf, \hmpc  \right]\geq 1- \delta,  \label{eq:specific}
\end{gather}
\end{subequations}
where the probabilities are over the randomness in the initial state. Further, under $\succE$ and the events in \eqref{eq:gar}, the controller $\tmpc_{0:T-1}$ does not violate any constraints.
\end{theorem}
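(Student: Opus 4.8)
The plan is to interpret the closed-loop trajectory generated by the learned controller as a trajectory of the robust MPC system \eqref{eq:mpc_robust} subject to an artificial disturbance, so that \autoref{thm:mayne} and \autoref{rmk:tstar} apply. Writing the deployment dynamics as
\begin{align*}
\hat x_{t+1} = A\hat x_t + B\hmpc_t(\hat x_t) = A\hat x_t + B\rmpc(\hat x_t) + w_t, \qquad w_t \coloneqq B\big(\hmpc_t(\hat x_t) - \rmpc(\hat x_t)\big),
\end{align*}
I note that whenever $\norm{\hmpc_t(\hat x_t)-\rmpc(\hat x_t)}\le \eps/\norm{B}$ we have $\norm{w_t}\le \eps$, i.e.\ $w_t\in\W=\B(\eps)$. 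Hence, on the event that every per-step imitation error is at most $\eps/\norm{B}$, the learned trajectory is an admissible disturbed robust-MPC trajectory, and \autoref{rmk:tstar} guarantees it lies in $\Olqrref$ by step $\tstar$.

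I would then build $\succE$ from three sub-events, each of probability at least $1-\delta$. First, let $\cE$ be the training-success event of \autoref{thm:emp_bern}. Second, conditioning on $\cE$, I apply the Markov-inequality argument behind \autoref{cor:markov} to each of the $\ell$ check trajectories and take a union bound over them; the stated lower bound on $n$ is calibrated so that, with probability at least $1-\delta$, every check trajectory satisfies $\norm{\rmpc(\hat x^{(i)}_t)-\hmpc_t(\hat x^{(i)}_t)}\le\eps/\norm{B}$ at all steps. By the disturbance interpretation, all $\ell$ check states then lie in $\Olqrref$ at stage $\tstar$, so the early-termination rule of \ref{forward-switch} fires no later than $\tstar$ and $\tinf\le\tstar$. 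Third, I control the selection induced by the data-dependent stopping rule: if at a candidate stage $t$ the true miss probability $q_t\coloneqq\Pr_{x_0}[\hat x_t\notin\Olqrref\mid\hmpc]$ exceeded $\delta$, then all $\ell$ independent check samples would land in $\Olqrref$ with probability at most $(1-\delta)^{\ell}\le e^{-\delta\ell}$; a union bound over the at most $T$ candidate stages together with $\ell\ge 10\ln(T/\delta)/\delta$ keeps the total failure probability below $\delta$. On this event $q_{\tinf}\le\delta$, which is exactly the first line of \eqref{eq:gar}; the second line is the fresh-rollout closeness \eqref{eq:specific}, which is \autoref{cor:markov} applied on $\cE$ with the stated $n$.

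It remains to certify constraint satisfaction of $\tmpc_{0:T-1}$ on a fresh rollout, under $\succE$ and the two events of \eqref{eq:gar}. For $t<\tinf$ the closeness bound \eqref{eq:specific} places the rollout in the disturbed robust-MPC regime with $\norm{w_t}\le\eps$, so the constraint-tightening construction of \citet{mayne2005robust} (using $\Xref=\X\ominus\inva$ and $\Uref=\U\ominus\klqr\inva$) yields $\hat x_t\in\X$, while $\hmpc_t(\hat x_t)\in\U$ holds by the second part of \autoref{ass:policy}. At $t=\tinf$ the event $\hat x_\tinf\in\Olqrref$ from \eqref{eq:gar} holds, and for $t\ge\tinf$ the controller switches to $\dlqr$; since $\Olqrref=\Olqr(\Xref,\Uref)$ is positively invariant under $\klqr$ by \autoref{def:pos_inv}, the state stays in $\Olqrref\subseteq\Xref\subseteq\X$ and the input $\klqr\hat x_t$ stays in $\Uref\subseteq\U$, so no constraint is violated.

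The step I expect to be the main obstacle is the third sub-event: because $\tinf$ is a data-dependent stopping time, one must avoid selection bias when passing from the empirical ``all $\ell$ samples in $\Olqrref$'' test to a statement about the true inclusion probability. The clean remedy is the uniform union bound over all $T$ candidate stages sketched above, which decouples the random $\tinf$ from the binomial tail; pinning down the resulting logarithmic dependence (hence the $\ln(T/\delta)/\delta$ scaling of $\ell$ and the $\ln(2T\ell|\Pi|/\delta)$ factor in $n$) is the delicate accounting. A secondary care point is keeping the three randomness sources---training, the $\ell$ check trajectories, and the fresh rollout---properly separated so that the conditioning in \eqref{eq:gar} stays consistent.
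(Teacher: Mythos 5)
Your proposal is correct and follows the same overall architecture as the paper's proof: the same reinterpretation of the learned closed-loop trajectory as a robust-MPC trajectory with artificial disturbance $w_t = B(\hmpc_t(\hat x_t)-\rmpc(\hat x_t)) \in \B(\eps)$, the same three-event decomposition (training success $\cE$ from \autoref{thm:emp_bern}/\autoref{cor:markov}; per-trajectory closeness via Markov plus a union bound over the $\ell$ check rollouts, giving $\tinf \leq \tstar$; and a third event certifying $\Pr[\hat x_\tinf \in \Olqrref \mid \tinf, \hmpc] \geq 1-\delta$), and the same constraint-satisfaction argument (tightened constraints handle $t < \tinf$, positive invariance of $\Olqrref$ under $\dlqr$ handles $t \geq \tinf$, and \autoref{ass:policy} handles the input constraint). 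The only substantive divergence is in the third event: the paper applies Bernstein's inequality uniformly over $t \in [0..T-1]$ to the empirical miss rate $\frac{1}{\ntraj}\sum_{i=1}^{\ntraj} \mathbf{1}\{\hat x_t^{(i)} \notin \Olqrref\}$ and then uses the fact that this empirical rate is exactly zero at $t = \tinf$ to deduce $\pstar_\tinf \leq 10\log(T/\delta)/\ntraj \leq \delta$; you instead argue by contraposition with a direct binomial tail bound, namely that any stage whose true miss probability exceeds $\delta$ passes the ``all $\ell$ checks inside $\Olqrref$'' test with probability at most $(1-\delta)^{\ntraj} \leq e^{-\delta \ntraj} \leq \delta/T$, and a union bound over the at most $T$ candidate stages then decouples the data-dependent stopping time from the tail event. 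Both arguments are valid, handle the selection bias you correctly flag as the delicate point, and yield the identical conclusion under the stated requirement $\ntraj \geq 10\ln(T/\delta)/\delta$; yours is more elementary (no Bernstein needed, only a one-sided Binomial-at-zero bound) and would in fact go through under the weaker requirement $\ntraj \geq \ln(T/\delta)/\delta$, whereas the paper's two-sided uniform concentration is what produces the constant $10$ appearing in the theorem statement.
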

\begin{remark}[Sample Complexity of \ref{forward-switch}]
Note that under the setting of \autoref{thm:stability}, the total number of expert demonstrations required by \ref{forward-switch} is upper bounded by $\widetilde O(n \tinf)=\widetilde O(\frac{\ell \tinf }{\eps \delta})=\widetilde O(\frac{\tinf}{\eps \delta^2})$, where $\widetilde O$ hides polylog factors in $T$, $\delta$, and $|\Pi|$. Since \autoref{thm:stability} guarantees $\tinf \leq \tstar$, the total number of expert demonstrations is thus upper bounded by $\widetilde O(\frac{\tstar \wedge T}{\epsilon \delta^2})$. Crucially, for large enough imitation learning horizon $T$ (in particular, for $T\geq \tstar$), the number of required trajectories depends only logarithmically on the horizon $T$ (since $\tstar$ is to be treated as a system's constant independent of $T$---see \autoref{rmk:tstar}).
\end{remark}
 We are left to quantify the cost achieved by the learned controller, which we do next. 

	\subsection{Performance Guarantees} 
	\label{sec:theory_MPC}
In this subsection, we bound the suboptimality of the controller $\tilde \pi$ learned by \ref{forward-switch}. For the theoretical analysis, we first define the Q-function of the reference controller $\piref$ defined in \ref{eq:mpcref}. Let $\Vref_t:\Xref_0 \to \R$ be the $t$-step cost function of $\piref$, i.e., for $x_0\in \Xref_0$ and   $\vv{s} \coloneqq \trajj{\piref}{x_0}{s}$, 
\begin{align*}
    \Vref_t(x_0) \coloneqq \sum_{t=0}^{t-1} \ell(\vv{s}, \piref(\vv{s}))\,.  
\end{align*}
where $\ell(x,u) = x^\top Qx+u^\top R u$ denotes the stage cost.
For $x\in \Xref_0$, and an input $u \in \Uref$ such that $Ax+Bu\in \Xref_0$, define 
	\begin{align}
		\Qref_{t}(x, u) \coloneqq \ell(x,u)+ \Vref_{T-t-1}(Ax + B u)\,. \label{eq:Qstar}
	\end{align}  
Let $\Vswitch_t$ be the $t$-step cost of the learned policy $\tmpc$ from \ref{forward-switch} (defined in a similar way to $\Vref_t$).  
We now state the performance guarantee of \ref{forward-switch}.

	\begin{theorem}[Performance Guarantee] \label{thm:performance}
	
  	Let $\delta, \varepsilon\in(0,1)$ and assume the same conditions as \autoref{thm:stability}.  
	Then, under the same event $\succE$ as \autoref{thm:stability} and for any $x_0$ satisfying the events in \eqref{eq:gar}, we have
		\begin{align}
		 \Vswitch_T(x_0) - \Vref_T(x_0)  \leq O( \tstar \eps) \,,
		\end{align}
where $O(\cdot)$ hides an absolute constant that depends on the system parameters and $\tstar$ is a system's constant independent of $T$---see \autoref{rmk:tstar}.
\end{theorem}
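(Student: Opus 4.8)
The plan is to control the suboptimality by a \emph{performance-difference} telescoping along the learned trajectory and then to show that only the first $\tinf\le\tstar$ stages contribute, each at scale $O(\eps)$. Write $\tilde x_t\coloneqq\traj{\tmpc_{0:t-1}}{x_0}$ for the closed-loop trajectory of the learned controller; by \autoref{thm:stability} this trajectory is feasible, so each $\tilde x_t\in\Xzeroref$ and every $\Qref_t$ below is well defined. Starting from the identity $\Qref_t(x,\piref(x))=\Vref_{T-t}(x)$ (immediate from \eqref{eq:Qstar} and the definition of $\Vref$), together with $\Vref_0\equiv 0$ and $\tilde x_{t+1}=A\tilde x_t+B\tmpc_t(\tilde x_t)$, the sum $\sum_t\big(\Vref_{T-t-1}(\tilde x_{t+1})-\Vref_{T-t}(\tilde x_t)\big)$ telescopes to $-\Vref_T(x_0)$ and yields
\begin{align}
\Vswitch_T(x_0)-\Vref_T(x_0)=\sum_{t=0}^{T-1}\Big(\Qref_t(\tilde x_t,\tmpc_t(\tilde x_t))-\Qref_t(\tilde x_t,\piref(\tilde x_t))\Big). \label{eq:pdlemma}
\end{align}
It then suffices to bound each per-stage term on the right-hand side of \eqref{eq:pdlemma}.

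For the tail $t\ge\tinf$ I would argue the terms vanish identically. By the first guarantee in \eqref{eq:gar}, $\tilde x_{\tinf}=\hat x_{\tinf}\in\Olqrref$; since $\Olqrref$ is positively invariant under $\dlqr$ and $\tmpc_t=\dlqr$ for $t\ge\tinf$, the entire tail of $\{\tilde x_t\}$ stays in $\Olqrref$. On $\Olqrref$ the reference controller coincides with LQR, $\piref=\dlqr$ (the analogue of \eqref{lqr_tinf} for \ref{eq:mpcref}), so $\tmpc_t(\tilde x_t)=\dlqr(\tilde x_t)=\piref(\tilde x_t)$ and each tail term of \eqref{eq:pdlemma} is exactly zero. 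Hence the sum collapses to the $\tinf$ head terms $t=0,\dots,\tinf-1$.

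For the head $t<\tinf$ (where $\tmpc_t=\hmpc_t$) I would show each term is $O(\eps)$ via a Lipschitz argument, splitting
\begin{align}
\Qref_t(\tilde x_t,\hmpc_t(\tilde x_t))-\Qref_t(\tilde x_t,\piref(\tilde x_t))=\underbrace{\Qref_t(\tilde x_t,\hmpc_t(\tilde x_t))-\Qref_t(\tilde x_t,\rmpc(\tilde x_t))}_{(\mathrm i)}+\underbrace{\Qref_t(\tilde x_t,\rmpc(\tilde x_t))-\Qref_t(\tilde x_t,\piref(\tilde x_t))}_{(\mathrm{ii})}. \label{eq:splitQ}
\end{align}
Term $(\mathrm i)$ is handled by \eqref{eq:specific}, which gives $\norm{\hmpc_t(\tilde x_t)-\rmpc(\tilde x_t)}\le\eps/\norm{B}$, so $(\mathrm i)\le\LQref\,\eps/\norm{B}$ once $\Qref_t(x,\cdot)$ is Lipschitz in the input with constant $\LQref$. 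Term $(\mathrm{ii})$ uses the tube structure of robust MPC: the input deviation above maps to a state disturbance $w_t=B(\hmpc_t-\rmpc)(\tilde x_t)$ with $\norm{w_t}\le\eps$, i.e.\ $w_t\in\W=\B(\eps)$, so $\{\tilde x_t\}$ is genuinely a $\W$-disturbed robust-MPC trajectory and \autoref{thm:mayne} applies, giving the invariant $\tilde x_t\in\xref_0(\tilde x_t)\oplus\inva$ (cf.\ \autoref{rmk:tstar}). Since $\rmpc(x)=\piref(\xref_0(x))+\klqr(x-\xref_0(x))$ with $x-\xref_0(x)\in\inva=\B(\kappa\eps)$, Lipschitzness of $\piref$ yields $\norm{\rmpc(\tilde x_t)-\piref(\tilde x_t)}\le(\Lpi+\norm{\klqr})\kappa\eps=O(\eps)$, whence $(\mathrm{ii})\le\LQref\cdot O(\eps)$. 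Summing the $\tinf\le\tstar$ head terms, each $O(\eps)$, and the vanishing tail gives $\Vswitch_T(x_0)-\Vref_T(x_0)\le O(\tstar\eps)$.

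The main obstacle is the Lipschitz constant $\LQref$: I must establish that $\Qref_t(x,\cdot)$ — equivalently each reference cost-to-go $\Vref_s$ for $s\le T$ — is Lipschitz with a \emph{single} constant uniform in $t$ and independent of $T$, on the bounded region actually visited by the trajectories. This rests on two facts: (a) \autoref{thm:stability} keeps all trajectories feasible and, through the robust exponential stability of \autoref{thm:mayne}, confined to a fixed compact set independent of $T$; and (b) on that compact set the convex, piecewise-quadratic finite-horizon cost-to-go $\Vref_s$ of the stabilizing controller $\piref$ is Lipschitz, and because the $s$-step costs converge geometrically they admit a common Lipschitz constant $\LVref$. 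Combined with the local Lipschitzness of the quadratic stage cost $\ell$, this gives $\LQref\le(\text{input-Lipschitz bound of }\ell)+\norm{B}\LVref=O(1)$. Verifying this uniform Lipschitz control, together with the Lipschitzness of $\piref$ on the reachable set, is the technical heart; the remaining telescoping and tube bookkeeping are routine.
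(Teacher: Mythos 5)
Your proposal is correct and follows essentially the same route as the paper's proof: a performance-difference decomposition along the learned trajectory (which you derive by telescoping where the paper cites the performance difference lemma), vanishing of the tail terms for $t\geq\tinf$ via positive invariance of $\Olqrref$ and the coincidence $\piref=\dlqr$ there, and an $O(\eps)$ bound on each of the $\tinf\leq\tstar$ head terms obtained by splitting $\tmpc_t$ against $\rmpc$ (via \eqref{eq:specific}) and $\rmpc$ against $\piref$ (via the tube bound $\norm{\cc{t}-\xref_0(\cc{t})}\leq\kappa\eps$ and Lipschitzness of $\piref$), all multiplied by a Lipschitz constant for $\Qref_t$. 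The only cosmetic difference is in justifying uniformity of that Lipschitz constant: you invoke geometric convergence of the $s$-step costs, while the paper uses the piecewise-quadratic structure of $\Vref_t$ together with boundedness of the disturbed robust-MPC trajectories (its Lemma \ref{lem:lipschitzness}).
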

The proof of this result is deferred to \autoref{pf:performance}.

	\section{Experiments}
	\label{sec:experiment}

In this section, we demonstrate our theoretical results for the MPC application through a set of experiments.
We demonstrate that \BC{} can indeed suffer from distribution shift and destabilize the system,
while \forward{} can cope with this issue.

\begin{figure}
    \centering
    \includegraphics[width=0.7\textwidth]{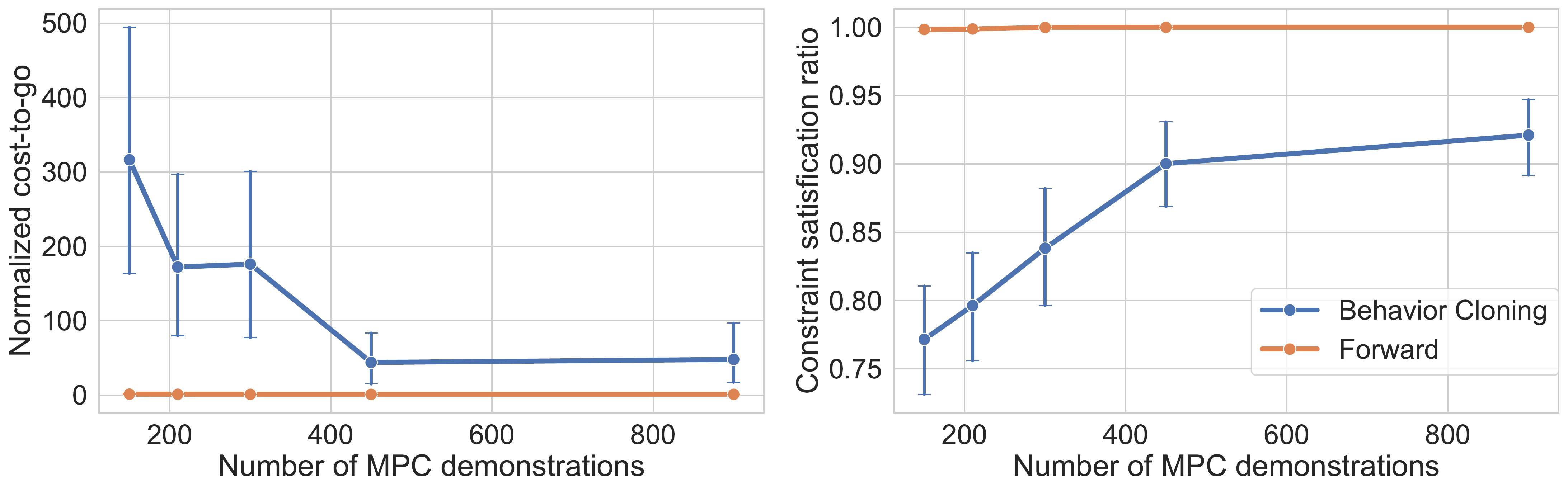}
     \includegraphics[width=0.7\textwidth]{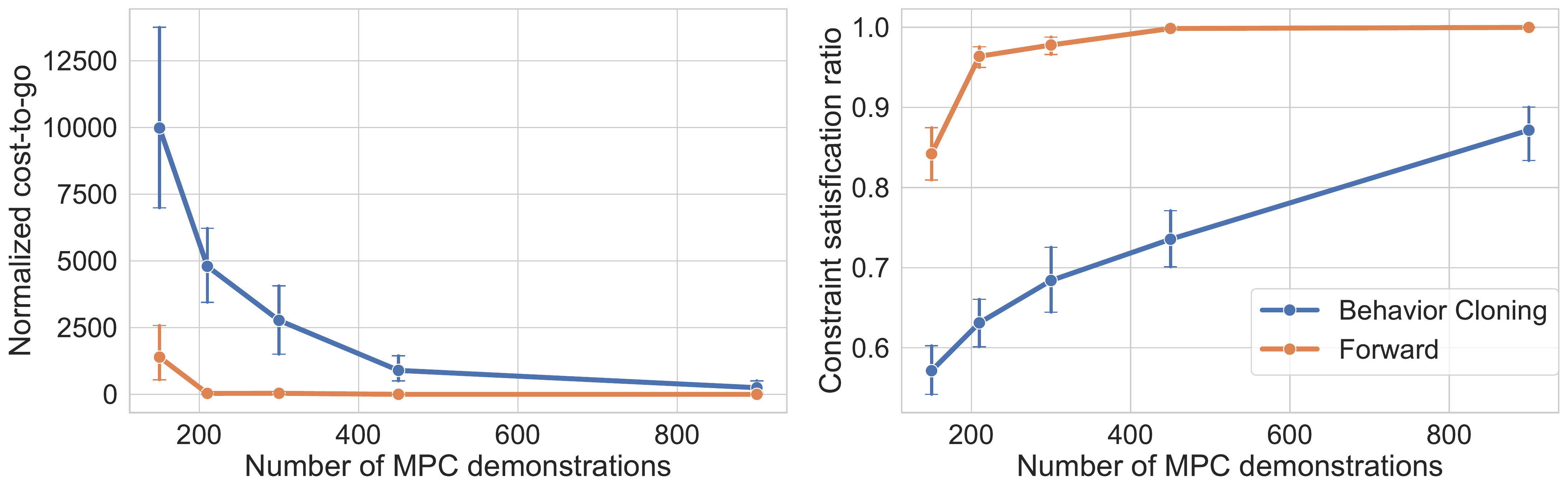}
    \caption{The results for the normalized cost-to-go and the constraint satisfaction ratio for the trajectory length $T=30$.  The first row contains the results for $d=3$, and the second row shows the results for $d=5$.}
    \label{fig:unstable}
\end{figure} 
\paragraph{Experimental Setup.} For $d\in \{3,5\}$, we consider an open-loop unstable dynamical system $x_{t+1} = Ax_t + Bu_t$, where $A\in \R^{d\times d}$ is chosen as an upper triangular matrix whose diagonal entries are $1.1$ and the upper diagonal entries are chosen from the uniform distribution over $[-2,2]$ (see \autoref{app:matrix} for the $A$ matrices used for the plots), and $B\in \R^{d\times 1}$ is chosen as $[0 \ 0 \ \cdots \ 1]^\top$. 
We impose the constraints $x_t \in [-100,100]^d$, $u_t\in [-10,10]$ and choose the initial state distribution $\D$ as the uniform distribution over $[8,10]^d$.
We set the horizon $N$ of MPC to be $20$ and the number of imitation learning time steps $T$ to be $30$, and we use  pyMPC~\cite{forgione2020efficient} for implementing MPC demonstrations. 
In the MPC optimization, we did not impose the terminal constraint.

To parametrize the policies we use a fully connected neural network with three hidden layers. Each layer has $50$ neurons followed by ReLU activations.
For optimization, we use the Adam optimizer with a learning rate of $0.001$. We train the policies for $500$ epochs.

\paragraph{Results.} 
We first compare the performance of \BC{} and \forward{}. For each algorithm, we measure the normalized cost $J^{\sf algorithm}(x_0)/J^{\sf mpc}(x_0)$ for $20$ different test initial states $x_0$ sampled from $\D$. 
Moreover, we report the constraint satisfaction ratio along the test trajectories. 
We repeat each setting in the experiment for $50$ times and report the $95\%$ confidence intervals with error bars.
The results are reported in \autoref{fig:unstable}.
As one can see from \autoref{fig:unstable}, for these systems, there is a significant difference in performance between the two algorithms.
For $d=3$,  the mean normalized cost of \forward{} is less than $1.2$ for all settings, while that of \BC{} is greater than $40$ even with $900$ MPC demonstrations.
For $d=5$, the normalized cost-to-go of \forward{} is less than $1.13$ with $450$ MPC demonstrations, while that of \BC{} is higher than $240$ even with $900$ MPC demonstrations.

\begin{figure}
    \centering
  \includegraphics[width=0.35\textwidth]{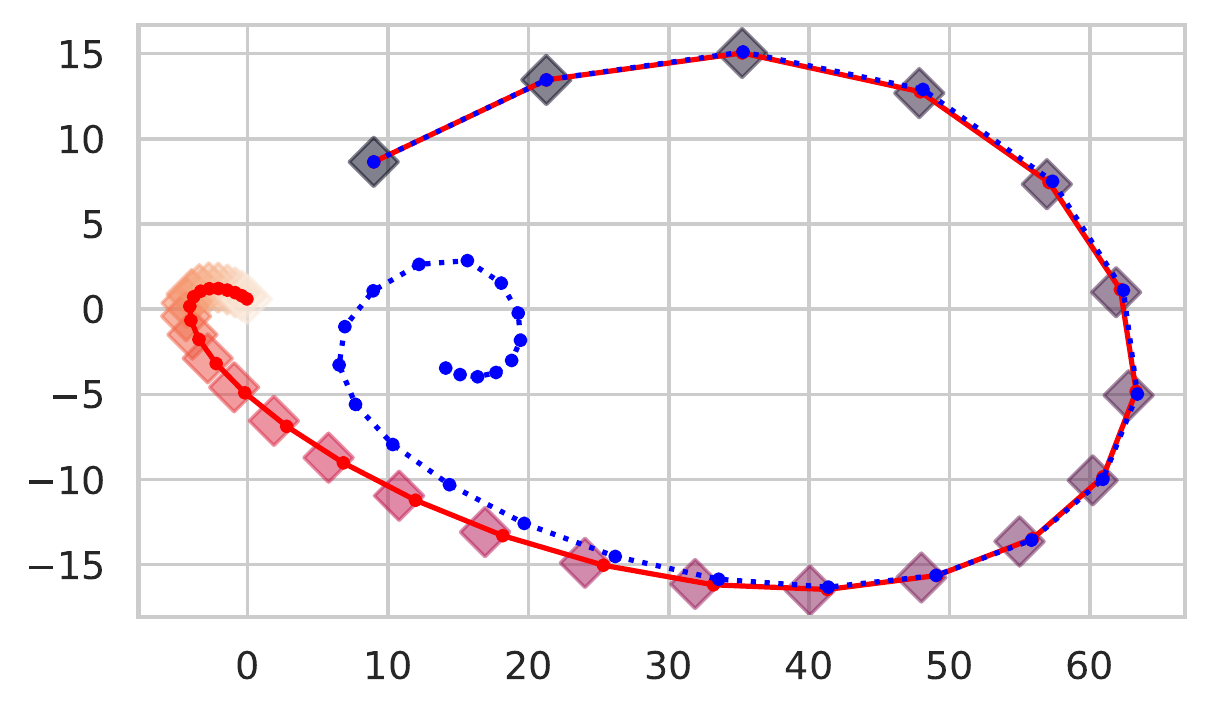}
  \includegraphics[width=0.35\textwidth]{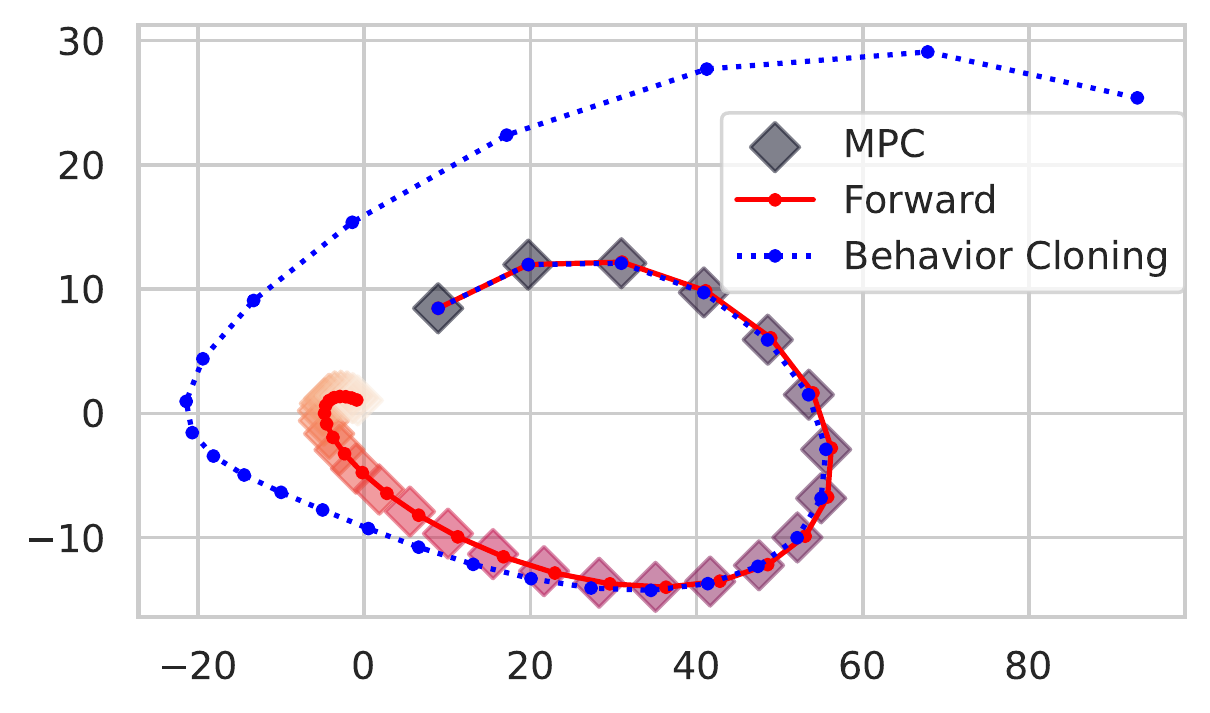} 
      \caption{The  first two coordinates of the sample trajectories produced by MPC, \forward{}, \BC{}.  The left plot is  for $d=3$, and the right column is for $d=5$.}
    \label{fig:unstable_traj}
\end{figure} 

 In order to visualize the results, we plot the first two coordinates of the sample trajectories produced by each controller  in \autoref{fig:unstable_traj}. As one can see from the figure, \BC{} indeed suffers from the distribution shift issue: small errors in the learned controller pile up along time steps and lead the trajectory to a region where the learned controller cannot stabilize the system.

\begin{remark}[Results for 2D systems]
 We also tried several $2$-dimensional systems, including (i) the $d=2$ case of our simulated system and (ii) double integrators (especially,  the versions in \cite[Section VI-A]{chen2018approximating} and \cite[Section VI-A]{jones2010polytopic}).
Interestingly, for these systems we tried, we did not see much difference in the performance between the two algorithms.
\end{remark}
 
We now test the performance of our proposed method \ref{forward-switch}.
For the same systems as before, we estimate $\tinf$ as per the procedure described in \ref{forward-switch}, where we check if the sample trajectory $\hat x^{(i)}_{t}$ lies in a subset\footnote{For ease of implementation, we use the subset that is defined by the level set of the terminal cost,  i.e., $\pter\coloneqq x^\top \plqr x$. It is well-known that the level set of $\pter$ is positive invariant w.r.t.~the LQR Controller. We simply choose the maximal level set of $\pter\coloneqq x^\top \plqr x$ in which the constraints are not violated under the LQR controller.} of $\Olqrref$.
Our estimated $\tinf$ for the $d=5$ case is $12$.

In \autoref{fig:switch}, we report the mean normalized cost-to-go and the constraint satisfaction ratio of    \ref{forward-switch}.
Notably, \ref{forward-switch} achieves the mean normalized cost-to-go of $\approx 1.034$ with only $180$ MPC demonstrations, while  \forward{} achieves the mean normalized cost-to-go of $\approx 35$ when trained using $210$ MPC demonstrations. 
Hence, our experiment indicates that \ref{forward-switch} is indeed more sample-efficient in some situations.

 We also compare the performance of  \ref{forward-switch} with its \BC{} counterpart.
For a fair comparison, we also train  \BC{} for $T=12$ steps and then for time steps greater than $12$, we employ the LQR controller.
In \autoref{fig:switch}, we report the mean normalized cost-to-go and the constraint satisfaction ratio of    \ref{forward-switch} and its \BC{} counterpart.
Although $T=12$ is smaller than the previous experiment setting where $T=30$,  we still see a noticeable difference in the performance between the two algorithms. 

\begin{figure}
    \centering
    \includegraphics[width=0.7\textwidth]{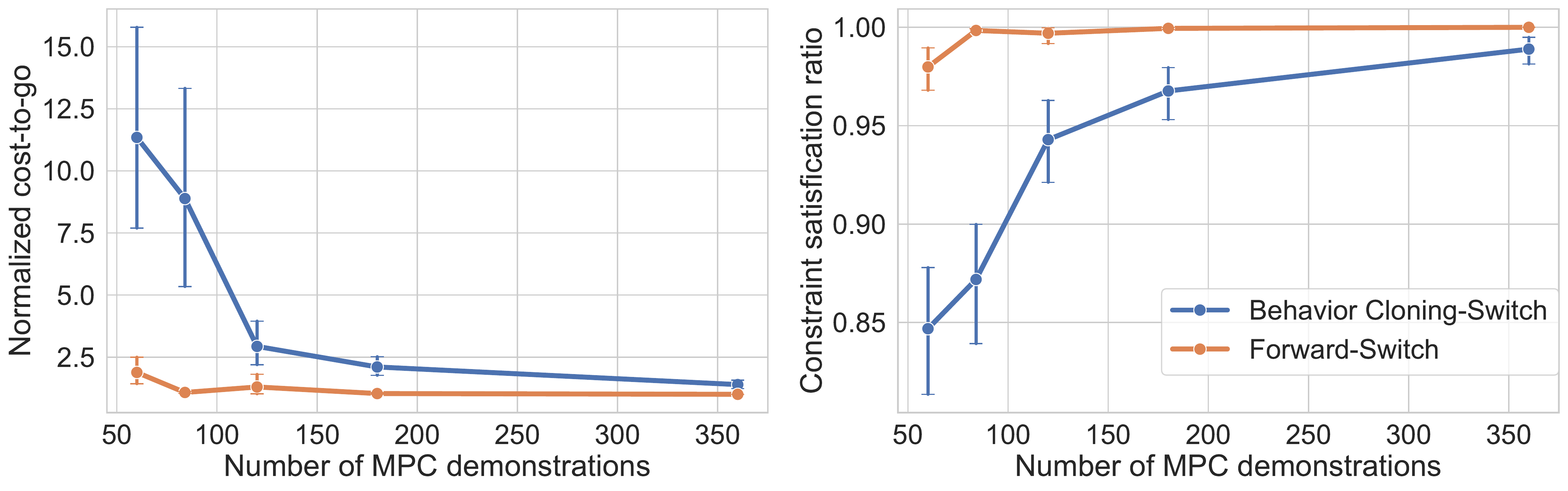}
    \caption{The performance comparison between \ref{forward-switch} and its \BC{} counterpart. The result is for $d=5$. Here the estimated number of steps $\tinf$ to reach the positive invariant set $\Olqrref$ is  $12$.
    }
    \label{fig:switch}
\end{figure}

\section{Conclusion}

 In this work, we leverage techniques from imitation learning to circumvent MPC's reliance on online optimization. More specifically, we adapt an interactive imitation learning algorithm called the forward training algorithm to take advantage of MPC's properties. When presented with a constrained linear system we show that our modified method learns a controller that stabilizes the dynamics, satisfies the state and input constraints, and achieves cost as good as that obtained by MPC.  We validate our results through simulations and compare the modified forward training algorithm with other data-driven methods.

We conclude this paper with interesting future directions. 
An alternative approach to ours is to learn the value function instead of the policy. In particular, it is known that the MPC value function is convex and piecewise  quadratic  \citep{bemporad2002explicit,seron2003characterisation}.
It might be interesting to see whether such properties make the approach based on learning value functions more desirable.
More broadly, whether  the value of each expert demonstration can be used to improve performance of imitation learning algorithms would be of great interest.
Lastly, combining our  approach with a direct policy optimization approach (e.g.,  \citet{chen2018approximating}) would be of great practical interest, given that a direct policy optimization typically requires more samples.

\section*{Acknowledgement}

Kwangjun Ahn, Zakaria Mhammedi, Horia Mania and Ali Jadbabaie were supported by the ONR grant (N00014-20-1-2394) and MIT-IBM Watson as well as a Vannevar Bush fellowship from Office of the Secretary of Defense.	
Zakaria Mhammedi was also supported by the ONR grant (N00014-20-1-2336).
Zhang-Wei Hong was supported by the ONR MURI grant (N00014-22-1-2740).
Kwangjun Ahn also acknowledges support from the Kwanjeong Educational Foundation.

Part of this work was done as Kwangjun Ahn's class project for 6.832: Underactuated Robotics at MIT, Spring 2022; Kwangjun Ahn thanks  Russ Tedrake for constructive comments on the project.
The authors thank Jack Umenberger and Sinho Chewi for very detailed comments regarding the theoretical results in the paper.
The authors also thank Haoyuan Sun and Navid Azizan for fruitful discussions during the initial stage of this work.
	
	\bibliographystyle{plainnat}
	\bibliography{ref}


	\appendix

	 \clearpage

\section{A Generalization Guarantee for Non-Finite Policy Classes}
	\label{app:nonfinite}
 
To present the generalization guarantee of \ref{forward} when the policy class $\Pi$ is non-finite, we need to define the notation of growth function:
\begin{definition}[Growth function~{\citep[pg. 2]{maurer2009empirical}}]
For $\eps >0$, a function class $\mathcal{F}= \{f:X\to \R\}$ and an integer $n$, the growth function $\mathcal{N}_{\infty}\left(\eps,\mathcal{F},n\right)$ is defined as 
\begin{equation*}
\mathcal{N}_{\infty }\left( \eps ,\mathcal{F},n\right) =\sup_{\mathbf{x}\in X^{n}}\mathcal{N}\left( \eps ,\mathcal{F}\left(\mathbf{x}\right),\left\Vert \cdot\right\Vert _{\infty }\right),\end{equation*}
where $\mathcal{F}\left( \mathbf{x}\right) =\left\{ \left( f\left(x_{1}\right) ,\cdots,f\left( x_{n}\right) \right) :f\in \mathcal{F}\right\}\subseteq \mathbb{R}^{n}$ and for $A\subseteq \mathbb{R}^{n}$ the number $\mathcal{N}\left( \eps ,A,\left\Vert \cdot\right\Vert_{\infty }\right)$ is the smallest cardinality $\left\vert A_{0}\right\vert $ of a set $A_{0}\subseteq A$ such that $A$ is contained in the union of $\eps$-balls centered at points in $A_{0}$, in the metric induced by $\left\Vert\cdot\right\Vert_{\infty}$.
\end{definition} 
We now state an analogue of \autoref{thm:emp_bern} when $\Pi$ is non-finite:
\begin{lemma}
	\label{thm:emp_bern_nonfinite}
	Let $\delta\in(0,1)$, $n\geq 16$, $\mathcal{M}(n)\coloneqq 10 \mathcal{N}_{\infty}(1/n, \Pi, 2n)$, and $\bdd{u} \coloneqq \sup_{u\in \U} \|u\|$. Further, $\hat \pi$ be the time-varying policy \ref{forward} and suppose that Assumption \ref{ass:policy} holds. Then, with probability at least $1-T\delta$, we have 
	\begin{align}
		\forall t\in[0,T-1],\quad 
		\mathbb{E} \left[\|\pi_\star(\hat x_t) -\hat\pi_t(\hat x_t)\| \right] \leq  \frac{30 \bdd{u} \ln (\mathcal{M}(n)/\delta)}{n-1}. \label{eq:error}
	\end{align}
\end{lemma}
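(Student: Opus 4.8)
The plan is to mirror the proof of \autoref{thm:emp_bern}, replacing the finite-class union bound (which produced the $\ln|\Pi|$ term) by the uniform empirical Bernstein inequality over an infinite class \citep[Lemma 6]{maurer2009empirical}, whose complexity term is precisely $\ln\mathcal{M}(n)$ with $\mathcal{M}(n)=10\mathcal{N}_\infty(1/n,\Pi,2n)$. First I would fix a stage $t\in\{0,\dots,T-1\}$ and condition on the controllers $\hat\pi_{0:t-1}$ learned in the earlier stages. These controllers are measurable with respect to the samples drawn in stages $0,\dots,t-1$ and are therefore independent of the fresh initial states $x_0^{(1)},\dots,x_0^{(n)}\sim\mathcal{D}$ used at stage $t$. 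Consequently, conditioned on $\hat\pi_{0:t-1}$, the rolled-out states $\hat x_t^{(i)}=\varphi_t(x_0^{(i)};\hat\pi_{0:t-1})$ are i.i.d., which is exactly the setting required by the empirical Bernstein inequality, and the expectation in \eqref{eq:error} is understood to be over a single fresh $x_0\sim\mathcal{D}$ with the training fixed.

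Next I would introduce the loss class. For each $\pi\in\Pi$ define $\ell_\pi(x)\coloneqq\|\pi_\star(x)-\pi(x)\|$ and set $\mathcal{L}\coloneqq\{\ell_\pi:\pi\in\Pi\}$. By \autoref{ass:policy} both $\pi(x)$ and $\pi_\star(x)$ lie in $\U$, so $0\le\ell_\pi(x)\le 2\bdd{u}$, and dividing by $2\bdd{u}$ puts the class into $[0,1]$ as needed. The key structural observation --- identical to the one driving the $1/n$ fast rate in the finite case --- is that the ERM $\hat\pi_t$ attains \emph{zero} empirical loss: since $\pi_\star\in\Pi$ and $\ell_{\pi_\star}\equiv 0$, the empirical minimizer satisfies $\tfrac1n\sum_i\ell_{\hat\pi_t}(\hat x_t^{(i)})=0$, and because $\ell$ is nonnegative this forces every sampled value to vanish, so the sample variance $V_n(\ell_{\hat\pi_t})$ is also $0$.

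I would then apply \citet[Lemma 6]{maurer2009empirical} to the rescaled class $\mathcal{L}/(2\bdd{u})$ (the hypothesis $n\ge 16$ is what this inequality demands): with probability at least $1-\delta$ over the stage-$t$ samples, simultaneously for all $\pi\in\Pi$,
\begin{align*}
\mathbb{E}[\ell_\pi]-\tfrac1n\textstyle\sum_{i=1}^n\ell_\pi(\hat x_t^{(i)})\le\sqrt{\tfrac{18\,V_n(\ell_\pi)\ln(\mathcal{M}(n)/\delta)}{n}}+\tfrac{30\,\bdd{u}\ln(\mathcal{M}(n)/\delta)}{n-1},
\end{align*}
where the additive constant $30=2\cdot 15$ arises from unscaling. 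Specializing to the data-dependent choice $\pi=\hat\pi_t$ and using that both its empirical mean and its sample variance vanish, the first two terms drop out and I obtain $\mathbb{E}[\ell_{\hat\pi_t}]\le 30\,\bdd{u}\ln(\mathcal{M}(n)/\delta)/(n-1)$, which is exactly \eqref{eq:error} for this stage. Finally I would take a union bound over the $T$ stages $t=0,\dots,T-1$, each failing with probability at most $\delta$, upgrading the per-stage statement to the ``for all $t$'' guarantee at total failure probability $T\delta$.

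The main obstacle I anticipate is bookkeeping around the complexity measure rather than any single hard estimate. The inequality is stated for scalar-valued classes, whereas $\Pi$ is vector-valued; one must verify that the scalar class $\mathcal{L}$ inherits the stated growth function. This follows because $\pi\mapsto\ell_\pi$ is pointwise $1$-Lipschitz, i.e.\ $|\ell_\pi(x)-\ell_{\pi'}(x)|\le\|\pi(x)-\pi'(x)\|$, so any $\eps$-cover of $\Pi$ in the relevant $\|\cdot\|_\infty$ metric induces an $\eps$-cover of $\mathcal{L}$ and hence $\mathcal{N}_\infty(\eps,\mathcal{L},n)\le\mathcal{N}_\infty(\eps,\Pi,n)$. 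The remaining care is to confirm that the conditioning argument genuinely delivers i.i.d.\ samples at each stage, so that the data-dependence of $\hat\pi_t$ is absorbed by the uniform-over-class nature of the bound, exactly as in the finite-class proof of \autoref{thm:emp_bern}.
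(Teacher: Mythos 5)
Your proposal is correct and follows essentially the same route as the paper's proof: apply the uniform empirical Bernstein inequality of \citet{maurer2009empirical} with complexity term $\ln\mathcal{M}(n)$, exploit realizability ($\pi_\star\in\Pi$) to conclude that the ERM's empirical loss and hence its empirical variance vanish, and union bound over the $T$ stages. Your added observations---that nonnegativity forces every sampled loss to be zero, and that the scalar loss class inherits the growth function of $\Pi$ via the $1$-Lipschitz map $\pi\mapsto\ell_\pi$---are details the paper leaves implicit, but they do not change the argument.
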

\begin{proof}
	Let $\ell_t(x)\coloneqq \|\pi_\star(x) -\hat \pi_t(x)\|$.
	By the Empirical Bernstein Inequality \citep[Theorem 6]{maurer2009empirical}, we have,
	with probability at least $1-\delta$, 
	\begin{align}
		\E_{x \sim \calD_t} \ell_t(\hat x_t) -\frac{1}{n}\sum_{i=1}^{n}\ell_t(\hat x_t^{(i)}) \leq   \sqrt{\frac{18 \widehat V_{t,n} \ln (\mathcal{M}(n)/\delta) }{n}} + \frac{30  \bdd{u} \ln (\mathcal{M}(n)/\delta)}{n-1},\label{eq:prefact}
	\end{align}
	where $\widehat V_{t,n} \coloneqq \frac{1}{n-1}\sum_{i=1}^{n} (\ell_t(\hat x_t^{(i)})- \frac{1}{n}\sum_{j=1}^{n}\ell_t(\hat x_t^{(i)}))^2$ is the empirical loss variance. Since $\pi_\star$ is in $\Pi$, we must necessarily have that $\frac{1}{n}\sum_{i=1}^{n}\ell_t(\hat x_t^{(i)})=0$. Furthermore, since $\widehat V_{t,n} \leq  \frac{4 \bdd{u}}{n}\sum_{i=1}^{n}\ell_t(\hat x_t^{(i)})$, we must also have $\widehat V_{t,n}=0$. Plugging these facts in \eqref{eq:prefact} implies the desired result.
\end{proof}

\section{Trajectory Guarantees under Robustness of Expert Controller}
\label{sec:thm_traj}

\citet{tu2021sample} show that one can prove the guarantees in terms of trajectories if the closed-loop system produced by $\expert$ is robust in the following sense.

	\begin{definition}[{Incremental Input-to-state Stability}] \label{def:ids}
		Let $\W$ be a compact subset of $\R^{d_x}$. For disturbances $w_t\in \W$, consider the discrete-time dynamics $x_{t+1} = f(x_t) + w_t$.
		Let $a,b\geq 1$, $\gamma >0$.
		We say that $f$ is $(a,b,\gamma)${\em-incrementally input-to-state stable ($\delta$ISS)}  if
		for all $T \in \N$, initial state $x_0=x_0\in \X$,
		\begin{align*} 
			\sum_{t=0}^{T} \norm{\traj{\{w_t\}}{x_0}  - \traj{\{0\}}{x_0}}^{a}  \leq  
			\gamma \cdot  {\sum_{t=0}^{T-1}}  \norm{w_t}^{b}\,.
		\end{align*} 
	\end{definition}
	See, e.g.,  \citep[Section 3.1]{tu2021sample} for examples.
	Now,  we demonstrate that our guarantee \eqref{thm:control_affine} translates into a trajectory guarantee under the $(a,b,\gamma)$-$\delta$ISS condition on the expert closed-loop system. The proof is deferred to  \autoref{sec:pf_thm_traj}.
	
	\begin{theorem}
		\label{thm:traj}
		Let $\delta\in(0,1)$. Consider the control-affine discrete-time dynamics $x_{t+1}=\fclexp(x_t)\coloneqq f(x_t) +g(x_t)\expert(x_t)$ with $\norm{g}_{\infty}\leq \Lg$, and suppose that $\fclexp$ is $(a,b,\gamma)$-$\delta$ISS with $b \leq a$. 
		Let $\hpi$ be the controller outputted by \forward{}. Also,  let $\xstar_t$ be the trajectory produced by $\expert$ and $\hat x_t$ be the trajectory produced by $\hpi$.
		Then, when \autoref{ass:policy} holds and $n, T \geq 2$, with probability at least $1-\delta$ (over the randomness in the training process), the time-varying controller $\hpi$ satisfies
		\begin{align*}
			\E\left[\left. \frac{1}{T}\sum_{t=0}^{T} \norm{\xstar_t- \hat x_t } \right| \pihat \right]  \leq  \left(\frac{\gamma}{T}\right)^{1/a} \cdot  \left(\frac{14\Lg \bdd{u}\ln (2T|\Pi|/\delta)}{n}\right)^{b/a}.
		\end{align*}
	\end{theorem}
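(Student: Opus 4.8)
The plan is to view the trajectory $\hat x_t$ produced by the learned controller as a \emph{disturbed} version of the expert's closed-loop trajectory, so that the $\delta$ISS property of \autoref{def:ids} can be invoked directly. Since $\hat x_{t+1} = f(\hat x_t) + g(\hat x_t)\hpi_t(\hat x_t)$, I would write
\[
\hat x_{t+1} = \fclexp(\hat x_t) + w_t, \qquad w_t \coloneqq g(\hat x_t)\big(\hpi_t(\hat x_t) - \expert(\hat x_t)\big),
\]
so that $\hat x_t = \traj{\{w_t\}}{x_0}$ and $\xstar_t = \traj{\{0\}}{x_0}$ are the disturbed and undisturbed trajectories of the \emph{same} map $\fclexp$ from the common initial state $x_0$. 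Using $\norm{g}_\infty \le \Lg$ gives $\norm{w_t}\le \Lg\, e_t$, where $e_t \coloneqq \norm{\expert(\hat x_t) - \hpi_t(\hat x_t)}$. Applying the $(a,b,\gamma)$-$\delta$ISS property of $\fclexp$ then yields the pathwise bound
\[
\sum_{t=0}^T \norm{\xstar_t - \hat x_t}^a \;\le\; \gamma \sum_{t=0}^{T-1}\norm{w_t}^b \;\le\; \gamma\,\Lg^b \sum_{t=0}^{T-1} e_t^b .
\]
One small point to address is that \autoref{def:ids} is phrased for a time-invariant map whereas $\hpi$ is time-varying; this is harmless, since the time dependence is entirely absorbed into the disturbance sequence $\{w_t\}$.

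Next I would convert the $\ell_1$-average appearing on the left of the target into the $\ell_a$-type quantity that $\delta$ISS controls. Because $a\ge 1$, the map $z\mapsto z^{1/a}$ is concave, so by the power-mean inequality over the $T+1$ indices, followed by the previous display,
\[
\frac1T\sum_{t=0}^T \norm{\xstar_t - \hat x_t} \;\le\; \frac{(T+1)^{1-1/a}}{T}\Big(\sum_{t=0}^T\norm{\xstar_t-\hat x_t}^a\Big)^{1/a} \;\le\; 2\Big(\frac\gamma T\Big)^{1/a}\Lg^{b/a}\Big(\sum_{t=0}^{T-1} e_t^b\Big)^{1/a},
\]
using $(T+1)^{1-1/a}/T \le 2\,T^{-1/a}$. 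Taking the conditional expectation $\E[\,\cdot\mid\hpi]$ and again invoking concavity of $z\mapsto z^{1/a}$ to move the expectation inside,
\[
\E\Big[\,\frac1T\sum_{t=0}^T\norm{\xstar_t-\hat x_t}\ \Big|\ \hpi\,\Big] \;\le\; 2\Big(\frac\gamma T\Big)^{1/a}\Lg^{b/a}\Big(\sum_{t=0}^{T-1}\E[e_t^b\mid\hpi]\Big)^{1/a}.
\]

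It then remains to bound $\sum_{t}\E[e_t^b\mid\hpi]$, and I expect this to be the main obstacle. The available ingredients are the per-step first-moment guarantee of \autoref{thm:emp_bern}, namely $\E[e_t\mid\hpi]\le 7\bdd{u}\ln(2T|\Pi|/\delta)/n_t$, and the almost-sure bound $e_t \le 2\bdd{u}$ coming from \autoref{ass:policy} (both $\hpi_t(x)$ and $\expert(x)$ lie in $\U$). The structural feature that makes the final bound independent of the horizon is the choice $n_t = cnt\lceil\ln^2(t+1)\rceil + n$ with $c=\sum_{t\ge1}1/(t\ln^2(t+1))$, which gives $\sum_{t\ge 0} n_t^{-1}\le 2/n$. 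Feeding the per-step control into this summation and raising to the $1/a$ power is what produces the stated factor $(14\Lg\bdd{u}\ln(2T|\Pi|/\delta)/n)^{b/a}$. The delicate part is passing from first-moment control of $e_t$ to the $b$-th moment $\E[e_t^b]$ while retaining the $b/a$ exponent and the $1/n$ scaling: a crude use of the a.s.\ bound only gives $\E[e_t^b]\le (2\bdd{u})^{b-1}\E[e_t]$, so obtaining the sharp dependence requires controlling $\E[e_t^b]$ more carefully (exploiting that realizability $\expert\in\Pi$ forces both the empirical loss and its empirical variance in the empirical-Bernstein step to vanish) together with the summability $\sum_t n_t^{-b}=O(n^{-b})$.
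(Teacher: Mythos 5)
Your disturbance decomposition $w_t = g(\hat x_t)\bigl(\hpi_t(\hat x_t) - \expert(\hat x_t)\bigr)$ is exactly the paper's \autoref{claim:traj} (including the observation that the time-dependence of $\hpi$ is absorbed into the disturbances), and your H\"{o}lder/power-mean step is sound. The genuine gap is the one you flag yourself at the end: your ordering of steps leaves you needing $\sum_t \E[e_t^b\mid\pihat]$ with $e_t \coloneqq \norm{\expert(\hat x_t)-\hpi_t(\hat x_t)}$, and this cannot be repaired at the claimed rate. \autoref{thm:emp_bern} controls only first moments, and first-moment information cannot force $\E[e_t^b]= O(n_t^{-b})$: even applying the empirical-Bernstein/realizability argument directly to the powered losses $e^b$ (whose empirical values and empirical variances also vanish under \autoref{ass:policy}) only yields $\E[e_t^b] \leq O\bigl((2\bdd{u})^b\ln(2T|\Pi|/\delta)/n_t\bigr)$, i.e., still a $1/n_t$ rate. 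This is unavoidable: a distribution with $e_t = 2\bdd{u}$ with probability of order $1/n_t$ and $e_t=0$ otherwise is consistent with every available guarantee, and it has $\E[e_t^b]$ of order $\bdd{u}^{b}/n_t$, far larger than $(\bdd{u}/n_t)^{b}$. Feeding $\sum_t\E[e_t^b\mid\pihat] = O(1/n)$ into your last display gives a final rate of $n^{-1/a}$, strictly weaker than the claimed $n^{-b/a}$ whenever $b>1$; in particular the "summability $\sum_t n_t^{-b}=O(n^{-b})$" you invoke is never usable, because the per-step bound you can actually establish is $O(n_t^{-1})$, not $O(n_t^{-b})$.

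The missing idea---and the way the paper's proof (via \autoref{prop:ddi}, quoted from Tu et al.) avoids $b$-th moments entirely---is to apply, \emph{pathwise and before taking any expectation}, the elementary superadditivity inequality $\sum_t \norm{w_t}^b \leq \bigl(\sum_t\norm{w_t}\bigr)^b$, valid since $b\geq 1$. Combined with your H\"{o}lder step this gives the deterministic bound $\frac{1}{T}\sum_t\norm{\xstar_t - \hat x_t} \leq 2(\gamma/T)^{1/a}\bigl(\Lg\sum_t e_t\bigr)^{b/a}$, in which the errors appear only through their \emph{first} powers. Then Jensen's inequality with the concave map $z\mapsto z^{b/a}$ (this is where $b\leq a$ enters) moves the conditional expectation inside, so all that remains is $\sum_{t}\E[e_t\mid\pihat] \leq 14\bdd{u}\ln(2T|\Pi|/\delta)/n$, which follows from \autoref{thm:emp_bern} together with the schedule $n_t = c\,n\,t\lceil\ln^2(t+1)\rceil + n$---exactly the ingredients you already identified. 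With this one reordering (superadditivity first, expectation last) your argument closes and recovers the stated bound, up to the inconsequential factor $(T+1)^{1-1/a}/T \leq 2\,T^{-1/a}$ incurred by your power-mean step.
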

	Being an on-policy imitation learning algorithm, our guarantee does not require the ERM problems to be constrained unlike the result of \citet{tu2021sample}. More specifically, the version of BC used in \citep[Algorithm 2]{tu2021sample} imposes $\pibc$ to be incrementally input-to-state stable by solving a constrained ERM problem. 
However, \citet{tu2021sample} acknowledged that there is no efficient way to enforce such a constraint in practice.
\begin{remark}
A very recent work by \citet{pfrommer2022taylor} also develops an algorithm proposed  that does not require solving a constrained ERM problem. 
However, their approach is quite different than ours. 
Roughly speaking, they additionally assume that the derivative of expert controller $\nabla \expert$ is  continuous, and then add additional terms to the ERM objective that forces $\nabla \hpi$ to be close to $\nabla \expert$. 
This requirement does not make their approach directly applicable to our main MPC application as the derivative of the MPC controller may not be continuous. It would be interesting to see if one can adapt their approach to the MPC application.
\end{remark}

\subsection{Proof of \autoref{thm:traj}} 
\label{sec:pf_thm_traj}
	
	To prove the theorem, we first recall the following result from \citet{tu2021sample} that can be proved using  H{\"{o}}lder's inequality and Jensen's inequality.
	
	\begin{proposition}[{\citep[Proposition C.4]{tu2021sample}}] \label{prop:ddi}
		Suppose that $f$ is $(a,b,\gamma)$-$\delta$ISS with $b \leq a$. Then, for all $T \in \N$,   $x_0 \in \X$ and random disturbance sequences $\{w_t\}_{t \geq 0} \subseteq \W$, we have
		\begin{align*}
		 \sum_{t=0}^{T} 	 \norm{\traj{ \{w_t\} }{x_0} -\traj{ \{0\}}{x_0}}  &\leq \gamma^{{1}/{a}} T^{1-{1}/{a}} 
	\left(\sum_{t=0}^{T-1} \norm{w_t} \right)^{{b}/{a}} .
		\end{align*}
	\end{proposition}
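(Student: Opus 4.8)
The plan is to derive the first-moment trajectory bound entirely from the $a$-th power bound supplied by the $\delta$ISS hypothesis, converting between the two exponents via Hölder's inequality on the state side and a norm-monotonicity (Jensen) inequality on the disturbance side. Write $d_t \coloneqq \norm{\traj{\{w_t\}}{x_0} - \traj{\{0\}}{x_0}}$ for the per-step deviation between the disturbed and undisturbed trajectories. Since both trajectories start at $x_0$, the deviation at $t=0$ vanishes, i.e.\ $d_0 = 0$, so $\sum_{t=0}^T d_t = \sum_{t=1}^T d_t$ effectively runs over only $T$ terms. The $(a,b,\gamma)$-$\delta$ISS property of $\fclexp$ (\autoref{def:ids}) supplies the single analytic input $\sum_{t=0}^{T} d_t^a \leq \gamma \sum_{t=0}^{T-1}\norm{w_t}^b$.

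The first step is to pass from $\sum_t d_t$ to $\bigl(\sum_t d_t^a\bigr)^{1/a}$. Assuming $a>1$ (the case $a=1$ forces $b=1$, and the claim is then exactly the $\delta$ISS inequality with $T^0=1$), I would apply Hölder's inequality with conjugate exponents $a$ and $a/(a-1)$ to the $T$ nonzero terms:
\[
\sum_{t=0}^T d_t = \sum_{t=1}^T d_t\cdot 1 \leq \Bigl(\sum_{t=1}^T d_t^a\Bigr)^{1/a}\Bigl(\sum_{t=1}^T 1\Bigr)^{1-1/a} = T^{1-1/a}\Bigl(\sum_{t=1}^T d_t^a\Bigr)^{1/a}.
\]
Extending the inner sum back to $t=0$ (harmless since $d_0=0$) and invoking the $\delta$ISS bound gives $\sum_{t=0}^T d_t \leq T^{1-1/a}\gamma^{1/a}\bigl(\sum_{t=0}^{T-1}\norm{w_t}^b\bigr)^{1/a}$.

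It then remains to replace $\sum_t \norm{w_t}^b$ by $(\sum_t\norm{w_t})^b$. Since $b\geq 1$, the $\ell^b$-norm of the sequence $(\norm{w_t})_t$ is dominated by its $\ell^1$-norm, i.e.\ $\sum_{t}\norm{w_t}^b \leq (\sum_t \norm{w_t})^b$; equivalently this is Jensen applied to the convex map $y\mapsto y^b$ on the nonnegative reals. Raising to the power $1/a$ yields $(\sum_t\norm{w_t}^b)^{1/a}\leq (\sum_t\norm{w_t})^{b/a}$, and substituting into the previous display produces exactly
\[
\sum_{t=0}^{T}\norm{\traj{\{w_t\}}{x_0} - \traj{\{0\}}{x_0}} \leq \gamma^{1/a}\,T^{1-1/a}\Bigl(\sum_{t=0}^{T-1}\norm{w_t}\Bigr)^{b/a}.
\]

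The argument is essentially bookkeeping with exponents, so I do not expect a deep obstacle; the points that require care are (i) observing $d_0=0$ in order to obtain the sharp prefactor $T^{1-1/a}$ rather than $(T+1)^{1-1/a}$, and (ii) invoking the right hypotheses at the right places—$a\geq 1$ for the Hölder/power-mean step and $b\geq 1$ for the norm-monotonicity step, both guaranteed by \autoref{def:ids}—with $b\leq a$ ensuring the final exponent $b/a$ lies in $(0,1]$ so that the bound is genuinely sublinear in the cumulative disturbance.
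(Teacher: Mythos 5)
Your proof is correct and follows exactly the route the paper indicates for this result (which it cites from \citet{tu2021sample} without reproducing the argument): H\"older's inequality with exponents $a$ and $a/(a-1)$ on the trajectory deviations, followed by the norm-monotonicity bound $\sum_t \norm{w_t}^b \leq \bigl(\sum_t \norm{w_t}\bigr)^b$ for $b\geq 1$. Your observation that $d_0=0$ is what yields the prefactor $T^{1-1/a}$ rather than $(T+1)^{1-1/a}$ is exactly the right bookkeeping, and your handling of the degenerate case $a=1$ is also sound.
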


    Now consider $\fclexp$ as the underlying dynamics and let $\trajs{t}{\{w_t\}}{x_0}$ be the trajectory with respect to $\fclexp$ starting from $x_0$ with disturbances $\{w_t\}$. 
		Then, we have $\trajs{t}{\{0\}}{x_0} = \traj{\expert}{x_0}$.
		Here, $\traj{\expert}{x_0}$ is the trajectory w.r.t the original underlying dynamics  \eqref{def:control_affine}.
		\begin{claim} \label{claim:traj}
			Then for the disturbances $\dis_t \coloneqq g(\hat x_t)(\hpi_t(\hat x_t)- \expert(\hat x_t))$, we have  $\trajs{t}{\{\dis_i\}}{x_0} = \hat x_t$.
		\end{claim}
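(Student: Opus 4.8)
The plan is to establish the identity by induction on $t$, reading off the one-step recursions that define the two trajectories and checking that the disturbance $\dis_t$ was chosen precisely to absorb the discrepancy between $\hpi_t$ and $\expert$ at the learned state $\hat x_t$. Recall that the perturbed expert trajectory obeys $\trajs{t+1}{\{\dis_i\}}{x_0} = \fclexp(\trajs{t}{\{\dis_i\}}{x_0}) + \dis_t$ with $\trajs{0}{\{\dis_i\}}{x_0}=x_0$, while the learned closed-loop trajectory obeys $\hat x_{t+1} = f(\hat x_t) + g(\hat x_t)\hpi_t(\hat x_t)$ with $\hat x_0 = x_0$.

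First I would dispatch the base case: at $t=0$ both trajectories equal $x_0$ by definition. For the inductive step, suppose $\trajs{t}{\{\dis_i\}}{x_0} = \hat x_t$. Substituting this into the perturbed expert recursion and then inserting the definition $\dis_t = g(\hat x_t)(\hpi_t(\hat x_t) - \expert(\hat x_t))$ gives
\begin{align*}
\trajs{t+1}{\{\dis_i\}}{x_0} &= \fclexp(\hat x_t) + \dis_t \\
&= f(\hat x_t) + g(\hat x_t)\expert(\hat x_t) + g(\hat x_t)\big(\hpi_t(\hat x_t) - \expert(\hat x_t)\big) \\
&= f(\hat x_t) + g(\hat x_t)\hpi_t(\hat x_t) = \hat x_{t+1},
\end{align*}
where the two $g(\hat x_t)\expert(\hat x_t)$ terms cancel. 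This closes the induction and proves the claim.

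There is no genuine obstacle here; the statement is an exact bookkeeping identity rather than an estimate. The only points requiring care are matching the disturbance convention (the perturbation is injected \emph{after} applying $\fclexp$, consistent with the dynamics $x_{t+1}=\fclexp(x_t)+w_t$ used to define $\varphi^{\star}$) and observing that $\dis_t$ is deliberately evaluated at the learned state $\hat x_t$ rather than at the expert's nominal state --- this is exactly what makes the cancellation work and lets us reinterpret the learned trajectory as a $\dis$-perturbation of the expert closed loop. This reinterpretation is the entire purpose of the claim, since it feeds the imitation error bound \eqref{thm:control_affine} into the $\delta$ISS estimate of \autoref{prop:ddi} to control $\sum_t \norm{\xstar_t - \hat x_t}$.
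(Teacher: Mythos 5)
Your proof is correct and follows essentially the same route as the paper's: induction on $t$, with the disturbance $\dis_t$ canceling the $g(\hat x_t)\expert(\hat x_t)$ term so that the perturbed expert recursion reproduces the learned closed-loop update. The only cosmetic difference is that you anchor the induction at $t=0$ (both trajectories equal $x_0$), whereas the paper verifies the $t=1$ step explicitly; the algebra is identical.
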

		\begin{proof}[Proof of \autoref{claim:traj}.]		We prove the claim by induction on $t$.
			First consider the base case. Then $\trajs{1}{\dis_0}{x_0}= \fclexp (x_0) + g(x_0)(\hat\pi(x_0)-\expert(x_0)) =f(x_0) +g(x_0)\expert(x_0) + g(x_0)(\hat\pi(x_0)-\expert(x_0)) = f(x_0)  + g(x_0)\hat\pi(x_0) = \trajj{\hat\pi_0}{x_0}{1} = \hat x_1$.
			Now consider $t>0$.
			\begin{align*}
				\trajs{t+1}{\{\dis_i\}}{x_0}&=\fclexp(\trajs{t}{\{\dis_i\}}{x_0}) +\dis_t, \nn \\ & =\fclexp(\hat x_t ) + g(\hat x_t)(\hpi_t(\hat x_t)- \expert(\hat x_t)),\\
				& = f(\hat x_t ) +g(\hat x_t)\expert(\hat x_t) + g(\hat x_t)(\hpi_t(\hat x_t)- \expert(\hat x_t)), \nn \\
				& = f(\hat x_t) + g(\hat x_t)\hpi_t(\hat x_t)  = \hat x_{t+1}\,.
			\end{align*}
			This concludes the proof of the claim.
		\end{proof}
		Now based on \autoref{claim:traj} together with \autoref{prop:ddi} and Jensen's inequality, we have
		\begin{align}
			\sum_{t=0}^{T} \E\left[\norm{\xstar_t- \hat x_t }\mid \pihat \right] &\leq \gamma^{{1}/{a}} T^{1-{1}/{a}} 
			\left(\sum_{t=0}^{T-1} \E[\norm{
				\dis_t}\mid \pihat]\right)^{{b}/{a }}\nn \\
			&\leq \gamma^{{1}/{a}} T^{1-{1}/{a}} 
			\left(\Lg \cdot \sum_{t=0}^{T-1} \E\left[\norm{\hpi_t(\hat x_t)- \expert(\hat x_t)}\mid \pihat\right]\right)^{{b}/{a}}\,. \label{eq:caused}
		\end{align}
Now, by \autoref{thm:emp_bern}, there is an event $\cE$ of probability at least $1-\delta$ such that under $\cE$: 
		\begin{align}
	\mathbb{E}\left[\|\expert( \hat x_t) -\hat\pi_t( \hat x_t)\|\mid \pihat \right] \leq   \frac{7 \bdd{u}\ln (2T|\Pi|/\delta)}{c n t \ln^2 (t+1)+n}\,, \quad \forall t\geq 0, \label{eq:trainingsuccess}
		\end{align}
 Thus, for $\eps_{n}\coloneqq {14 \bdd{u}}\ln (2T|\Pi|/\delta)/{n}$, we have 
 \begin{align}
  \sum_{t=0}^{T-1} \E\left[\norm{\hpi_t(\hat x_t)- \expert(\hat x_t)}\mid \pihat \right] & =    \mathbb{E} \left[ 		  \norm{\expert( \hat x_0) - \hat \pi_t ( \hat x_0)} \mid \pihat \right]+ \sum_{t=1}^{T-1} \mathbb{E} \left[ 	  \norm{\expert( \hat x_t) - \hat \pi_t ( \hat x_t)} \mid \pihat  \right],\nn \\
  & \leq \frac{\eps_n}{2}   +  \sum_{t=1}^{T-1} \frac{\varepsilon_{n}}{2c t \ln^2 (t+1)} \leq \eps_n.
 \end{align}
	Plugging this into \eqref{eq:caused} implies the desired result.

\section{Proofs from Main Text}
\label{app:proofs}

\subsection{Proof of \autoref{thm:emp_bern}}
\label{sec:pf_lem_emp_bern}

		Let $t\geq 0$, $n_t\geq 2$, and $\ell_t(x)\coloneqq \|\expert(x) -\hat \pi_t(x)\|$.
		By the Empirical Bernstein Inequality \citep[Corollary 5]{maurer2009empirical}, there is an event $\cE$ of probability at least $1-\delta$ such that under $\cE$: 
		\begin{align}
			\E[ \ell_t(\hat x_t)] -\frac{1}{n_t}\sum_{i=1}^{n_t}  \ell_t(\hat x^{(i)}_t) \leq   \sqrt{\frac{2 \widehat V_{t}\ln (|\Pi|/\delta) }{n_t}} + \frac{14  \bdd{u}  \ln (|\Pi|/\delta)}{3(n_t-1)}, \label{eq:firstgar}
		\end{align}
		where $\widehat V_{t}\coloneqq\sum_{i=1}^{n_t} (\ell_t(\hat{x}^{(i)}_t)- \sum_{j=1}^{n_t}\ell_t(\hat{x}^{(j)}_t)/n_t)^2/(n_t-1)$ is the empirical loss variance. Since $\expert$ is in $\Pi$ (\autoref{ass:policy}), it holds that  $\frac{1}{n_t}\sum_{i=1}^{n_t} \ell_t(\hat{x}^{(i)}_t)=0$, which implies have $\widehat V_{t}=0$. This implies that under the event $\cE$, 
		\begin{align}
		    	\mathbb{E} \left[\|\expert( \hat x_t) -\hat\pi_t( \hat x_t)\| \mid \pihat\right] \leq   \frac{7 \bdd{u} \ln (|\Pi|/\delta)}{n_t}. \label{eq:north}
		\end{align}
	Now, a union bound over $t\in [0..T-1]$ with uniform prior implies the desired result.	

\subsection{Proof of \autoref{cor:markov}}
\label{sec:pf_markov}
By \autoref{thm:emp_bern}, there is an event $\cE$ of probability at least $1-\delta$ such that under $\cE$: 
		\begin{align}
	\mathbb{E}\left[\|\expert( \hat x_t) -\hat\pi_t( \hat x_t)\|\mid \pihat \right] \leq   \frac{7 \bdd{u} \ln (2T|\Pi|/\delta)}{c n t \ln^2 (t+1)+n}\,, \quad \forall t\geq 0, \label{eq:trainingsuccess}
		\end{align}
 Let $\eps_{n}\coloneqq {14 \bdd{u}}\ln (2T|\Pi|/\delta)/{n}$. Under the event $\cE$, we have by Markov's inequality: for all $n,$ \begin{align*}
	&\mathbb{P} \left[ \exists t \in [0,T-1],\ 	  \norm{\expert( \hat x_t) - \hat \pi_t ( \hat x_t)} \geq \frac{\varepsilon_{n}}{\delta}  \mid \pihat\right]\nn\\
\leq& \sum_{t=0}^{T-1}	\mathbb{P} \left[  		  \norm{\expert( \hat x_t) - \hat \pi_t ( \hat x_t)} \geq \frac{\varepsilon_{n}}{\delta} \mid \pihat\right], \nonumber \\ 
	 \leq & \frac{\delta}{\varepsilon_{n}}  \mathbb{E} \left[ 		  \norm{\expert( \hat x_0) - \hat \pi_t ( \hat x_0)} \mid \pihat \right]+\frac{\delta}{\varepsilon_{n}} \sum_{t=1}^{T-1} \mathbb{E} \left[ 	  \norm{\expert( \hat x_t) - \hat \pi_t ( \hat x_t)}  \mid \pihat \right],\nonumber \\
	 \leq & \frac{\delta}{2}   + \frac{\delta}{\varepsilon_{n}} \sum_{t=1}^{T-1} \frac{\varepsilon_{n}}{2c t \ln^2 (t+1)}  \leq   \delta, 
\end{align*}
where the probabilities and expectations in the above inequalities are with respect to the randomness in the initial state. Thus, under the event $\cE$, we have with probability at least $1-\delta$ (over the randomness in the initial state),  
\begin{align}
\label{thm:control_affine_old}
			  \norm{\expert( \hat x_t) - \hat \pi_t ( \hat x_t)} \leq \frac{ 14 \bdd{u}\ln (2T|\Pi|/\delta)}{n\delta},\quad \forall t\geq 0.
\end{align}

\subsection{Proof of \autoref{thm:stability}}
\label{pf:stability}

Throughout the proof, we condition on the event $\cE$ described in \autoref{cor:markov} and recall that $\Pr[\cE]\geq 1-\delta$ (randomness w.r.t.~the training process). 
From \eqref{thm:control_affine} in \autoref{cor:markov}, for any $\ell\geq 1$, there exists an event $\ccE\subseteq \mathcal{X}$ of probability at least $1 -\delta/\ell$ (over the randomness in the initial state) under which for all
$t\in[0, \tstar-1]$, 
\begin{align} 
\norm{\rmpc( \hat x_t) - \hat \pi_t ( \hat x_t)} \leq  \frac{ 14 \ell\bdd{u}  \ln (2T\ell|\Pi|/\delta)}{\delta n}, \end{align} 
  Hence, for $\ell\geq 1$, if we choose \begin{align*}
    n\geq \frac{14 \ell \norm{B}\bdd{u}\ln (2T\ell |\Pi|/\delta)}{\eps\delta}\,,
\end{align*} 
then  for all $t=0,\dots \tstar-1$, 
we have $$\norm{\rmpc( \hat x_t) - \hat \pi_t ( \hat x_t)} \leq  \frac{\eps}{\norm{B}}.$$ This implies that $\norm{B(\rmpc( \hat x_t) - \hat \pi_t ( \hat x_t))} \leq  \eps$, and so it follows that under $\ccE$,
\begin{align}
    \forall t\in[0, \tstar-1], \quad \hat x_{t+1} = A \hat x_t + B  \hpi(\hat x_t)  \in A \hat x_t + B \rmpc(\hat x_t ) \oplus \B(\eps) \,, \label{eq:thisevent}
\end{align}
which implies that $\hat x_t$ is an instance of the closed-loop trajectory produced by the robust MPC controller subject to a sequence of  $\eps$-bounded disturbances. Thus, under the event $\ccE$, the trajectory $\{\hat x_{t}\}_{t<\tstar}$ does not violate the constraints and also $\hat x_{t}\in \Olqrref$. 

 We now show that $\hat \tau_{\infty} \leq \tau_{\infty}^\star$ with high probability.
 Recall from \ref{forward-switch} that at the end of each stage $t$, we sample $\ell$ trajectories according to our learned controller $\hpi_{0:t-1}$. 
For $i=1,2,\dots, \ell$, let $\hat x_t^{(i)}$ be the   $i$-th sampled trajectory.
Under $\ccE$, we know that $\hat x_{\tstar}\in \Olqrref$. 
Since $\Pr[\ccE]\geq 1-\delta/\ell$, a union bound over all $i=1,2,\dots,\ell$ yields that there is an event $\cE'$ of probability at least $1-\delta$ s.t.  $\hat x_{\tstar}^{(i)} \in \Olqrref$ for all $i=1,\dots,\ell$.
Hence, under $\cE'$, we have  $\tinf\leq \tstar$. 

We now show the validity of the estimate $\tinf$ of $\tstar$ in the sense that $\hat x_{\tinf} \in \Olqrref$ with high probability.
Let $\pstar_t=\mathbb{P}[\hat x_t \not\in \Olqrref \mid \hmpc]$.
By Bernstein's inequality and a union bound over $t \in[0..T-1]$ with uniform prior, it follows that there exists an event $\cE''$ of probability at least $1-\delta$  such that
\begin{align} \label{exp:prob_ineq}
    \left|\frac{1}{\ntraj}\sum_{i=1}^\ntraj \mathbf{1}\{ \hat x_t^{(i)} \not\in \Olqrref  \} - \pstar_t \right| \leq \sqrt{\frac{2 \pstar_t \cdot \log (T/\delta)}{\ntraj }} + \frac{2 \log (T/\delta)}{3\ntraj}\, ,\quad \forall t\geq 0, 
\end{align}
By definition of $\tinf$, we have $\mathbf{1}\{ \hat x_\tinf^{(i)} \not\in \Olqrref\}=0$, for all $i=1,\dots, \ntraj$, and so under $\cE'\cap \cE''$, we have 
\begin{align*}  
     \pstar_\tinf  &\leq \sqrt{\frac{2 \pstar_\tinf \cdot \log (T/\delta)}{\ntraj }} + \frac{2 \log (T/\delta)}{3\ntraj},\nn \\
     \shortintertext{which after rearranging implies that}
    \mathbb{P}[\hat x_{\tinf} \not\in \Olqrref \mid \tinf,\hmpc] &=\pstar_\tinf  \leq  \frac{10\log (T/\delta)}{\ntraj}
\end{align*}
 Therefore, by choosing $\ntraj$ such that $\ntraj \geq \frac{10\log (T/\delta)}{\delta}$ and $n \geq \frac{14  \ell\norm{B}\bdd{u}\ln (2T\ell |\Pi|/\delta)}{\eps\delta}$, we have that under $\cE'\cap \cE''$, $\hat \tau_{\infty}$ in \ref{forward-switch} satisfies
\begin{align}
\hat \tau_{\infty} \leq \tstar \quad \text{and}\quad \Pr[\hat x_\tinf \in \Olqrref\mid \tinf, \hmpc]\geq 1 -\delta.\nn
\end{align}
Thus, the event $\bar{\cE}\coloneqq\cE \cap \cE'  \cap \cE''$ (which is of probability at least $1-3\delta)$ satisfies the requirements in the theorem's statement. Lastly, as long as $\hat x_\tinf \in \Olqrref$, the LQR controller $\dlqr$ exponentially stabilizes the system while ensuring that the states $\{\hat x_t\}_{t\geq \tinf}$ remain within the positively invariant set $\Olqrref$.  
 And, we have already argued earlier that under the event $\cE'$, the states $\{\hat x_t\}_{t< \tinf}$ do not violate the constraints, which completes the proof.
  
\subsection{Proof of \autoref{thm:performance}}
\label{pf:performance}

We first note a few facts: 
\bli
\item First, the Lipschitz constant of $\piref$ is finite since $\piref$ is continuous piecewise affine function with finitely many pieces (which is a well known property in the MPC literature~\citep{bemporad2002explicit,mayne2001control}). Denote this constant by $\Lpi$.
\item From this fact, it also follows that for each $t$, $\Vref_t$ is continuously differentiable piecewise quadratic with finitely many pieces~\citep{bemporad2002explicit,mayne2001control}. In particular, this fact implies that there exists  $\LVreft>0$ such that $\norm{\nabla \Vref_t(x)} \leq \LVreft \norm{x}$.
Taking $\LVref \coloneqq \max_{t=1,\dots, T} \LVreft$, we thus have $\norm{\nabla_x \Vref_t(x)} \leq \LVref \norm{x}$ for all $t=1,2,\dots,T$.
\item Lastly, from the robust stabilizability result in \autoref{thm:mayne}, as long as the support of $\D$ is bounded almost surely, the closed-loop trajectory produced by the robust MPC controller is bounded subject to any sequence of  $\eps$-bounded disturbances.
Let $\bdd{x}$ be the bound on the closed-loop trajectories of robust MPC under such disturbances and with initial distribution $\D$.

\eli
Using these facts, we prove the following result:
\begin{lemma}
		\label{lem:lipschitzness}
		Let  $\bdd{u} \coloneqq \sup_{u\in \U}\norm{u}$. 
		For each $t=1,2,\dots, T$, the function $\Qref_t$ in \eqref{eq:Qstar} is Lipschitz in the second argument with constant $\LQref=  \LR \bdd{u} +  \LVref \bddx \LB$ for all $\norm{x} \leq \bdd{x}$.
\end{lemma}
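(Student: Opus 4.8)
The plan is to prove the bound directly from the definition of Lipschitz continuity in the second argument. Fix any $x$ with $\norm{x}\le\bddx$ and any two admissible inputs $u_1,u_2\in\Uref$ (each satisfying $Ax+Bu_i\in\Xref_0$, as required in the domain of $\Qref_t$), and show $\abs{\Qref_t(x,u_1)-\Qref_t(x,u_2)}\le \LQref\norm{u_1-u_2}$. Writing $\Qref_t(x,u)=\ell(x,u)+\Vref_{T-t-1}(Ax+Bu)$ and using the triangle inequality, it suffices to control the stage-cost increment and the cost-to-go increment separately, and then read off $\LQref$ as the sum of the two constants.

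For the stage-cost term, the $x^\top Q x$ contributions cancel, so $\ell(x,u_1)-\ell(x,u_2)=u_1^\top R u_1-u_2^\top R u_2$. I would apply the mean-value inequality to the map $u\mapsto u^\top R u$, whose gradient is $2Ru$, and bound $\norm{u}\le\bddu$ along the segment joining $u_1$ and $u_2$. This yields $\abs{\ell(x,u_1)-\ell(x,u_2)}\le \LR\bddu\norm{u_1-u_2}$ (up to an absolute constant), which is the first summand of $\LQref$.

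For the cost-to-go term I would invoke the smoothness facts recorded just before the lemma: $\Vref_{T-t-1}$ is continuously differentiable with $\norm{\nabla\Vref_s(z)}\le\LVref\norm{z}$. Because it is globally $C^1$ (not merely piecewise smooth), the fundamental theorem of calculus applies along the segment from $Ax+Bu_2$ to $Ax+Bu_1$, giving
\[
\abs{\Vref_{T-t-1}(Ax+Bu_1)-\Vref_{T-t-1}(Ax+Bu_2)}\le \Big(\sup_{\theta\in[0,1]}\norm{\nabla\Vref_{T-t-1}(z_\theta)}\Big)\,\norm{B(u_1-u_2)},
\]
where $z_\theta\coloneqq Ax+B(u_2+\theta(u_1-u_2))$. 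The increment factor satisfies $\norm{B(u_1-u_2)}\le\LB\norm{u_1-u_2}$, and the gradient factor is at most $\LVref\sup_\theta\norm{z_\theta}$; combining the two produces the second summand $\LVref\bddx\LB$ once $\norm{z_\theta}\le\bddx$ is established.

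The main obstacle—and essentially the only nontrivial point—is justifying that every point $z_\theta$ on that segment satisfies $\norm{z_\theta}\le\bddx$, so that the \emph{linear} gradient-growth bound $\norm{\nabla\Vref_s(z)}\le\LVref\norm{z}$ translates into a \emph{uniform} Lipschitz constant. Here I would use that both endpoints $Ax+Bu_1$ and $Ax+Bu_2$ lie in $\Xref_0$ (by the domain restriction imposed in the definition of $\Qref_t$), that $\Xref_0$ is contained in the ball of radius $\bddx$ (by the boundedness of the robust-MPC feasible region and its closed-loop trajectories established in the facts preceding the lemma), and that this ball is convex and hence contains the whole segment $\{z_\theta:\theta\in[0,1]\}$. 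This convexity step is precisely what converts a pointwise growth bound on $\nabla\Vref_s$ into a single global Lipschitz constant over the relevant domain, after which adding the two summands gives $\LQref=\LR\bddu+\LVref\bddx\LB$.
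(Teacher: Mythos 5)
Your proof follows essentially the same route as the paper's: the same decomposition of $\Qref_t$ into stage cost plus cost-to-go, the same bound $\LR\bddu\norm{u_1-u_2}$ on the stage-cost increment (both you and the paper drop a factor of $2$, since the honest bound is $\abs{u_1^\top R u_1-u_2^\top R u_2}\le 2\norm{R}\bddu\norm{u_1-u_2}$), and the same combination of the gradient-growth estimate $\norm{\nabla\Vref_s(z)}\le\LVref\norm{z}$ with $\norm{B(u_1-u_2)}\le\LB\norm{u_1-u_2}$ for the cost-to-go increment. The paper's proof is literally these two lines; it silently assumes the point you correctly isolate as the only nontrivial one, namely that $\norm{z_\theta}\le\bddx$ along the whole segment. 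However, your justification of that step leans on a containment the paper never establishes: $\bddx$ is defined only as a bound on the closed-loop trajectories of robust MPC under $\eps$-bounded disturbances started from the support of $\D$, not as a bound on the feasible set, so $\Xzeroref\subseteq\B(\bddx)$ is not among ``the facts preceding the lemma,'' and it can fail outright when $\X$ is unbounded (the paper only requires constraint sets to be closed and to contain the origin). In the paper's downstream use (\autoref{thm:performance}) the lemma is invoked only at states $\cc{t}$ lying on perturbed robust-MPC trajectories and at the corresponding next states, which is exactly where the bound $\bddx$ genuinely applies; a clean repair is either to restrict the lemma's statement to that set of state--input pairs, or to forgo matching the paper's constant and use the unconditional bound $\norm{z_\theta}\le\norm{Ax}+\norm{B}\norm{(1-\theta)u_2+\theta u_1}\le\LA\bddx+\LB\bddu$, which yields the slightly larger Lipschitz constant $\LR\bddu+\LVref(\LA\bddx+\LB\bddu)\LB$. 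One further requirement shared (and left unstated) by both proofs: the fundamental-theorem-of-calculus step needs $\Vref_{T-t-1}$ to be defined and $C^1$ on the segment, i.e., the segment must lie in $\Xzeroref$; this holds because $\Xzeroref$ is convex for linear dynamics with convex constraint sets, but your convexity argument for the ball does not by itself cover it. Net: your argument matches the paper's at the same level of rigor and is more explicit about where the real work lies, but the specific justification you give for the segment bound overreaches what the paper provides.
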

\begin{proof}
Fix $t=1,\dots, T$.  
Using the fact that $\norm{\nabla_x \Vref_t(x)} \leq \LVref \norm{x}$, it holds that for $u,v\in\U$, 
$$\Vref_{T-t-1}(Ax+Bu ) - \Vref_{T-t-1}(Ax+Bv) \leq   \LVref \bddx \LB\norm{u-v }.$$ 
Thus, for $u,v\in\U$,   $\Qref_t(x,u)-\Qref_t(x,v) \leq  \LR \bdd{u} \norm{u-v} + \LVref \bdd{x} \norm{u-v}$.
\end{proof}

For the remainder of this proof, we condition on the event $\succE$ of \autoref{thm:stability} and let $x_0$ be an initial state satisfying the events in \eqref{eq:gar}. Now, let  $\cc{t} \coloneqq \traj{\tmpc_{0:t-1}}{x_0}$ and $\xx{t} \coloneqq \traj{\hmpc_{0:t-1}}{x_0}$ where $\tmpc,\hmpc$ are defined in \eqref{switch} of \ref{forward-switch}. By \autoref{thm:stability}, $\cc{t}$ is an instance of the closed-loop trajectory produced by the robust MPC controller subject to a sequence of  $\eps$-bounded disturbances; see \eqref{eq:thisevent} in the proof of \autoref{thm:stability} for details.
Hence, $\norm{\cc{t}}\leq \bdd{x}$ almost surely. 
 By the performance difference lemma (see, e.g.~\citep[Lemma D.12]{foster2020logarithmic}), it then follows that
 \begin{align}
		\Vswitch_T(x_0) - \Vref_T(x_0)  &  =  \sum_{t=0}^{T-1} \Qref_t(\cc{t}, \tmpc_t(\cc{t})) - \Qref_t(\cc{t} , \piref (\cc{t})), \nn  \\
		&\overset{(a)}{=} \sum_{t=0}^{\tinf-1} \Qref_t(\cc{t}, \tmpc_t(\cc{t})) - \Qref_t(\cc{t} , \piref (\cc{t})),\nn\\
			&\leq  \LQref \sum_{t=0}^{\tinf-1} \|\tmpc_t(\cc{t}) - \piref(\cc{t})\| ,\nn\\
			&\leq \LQref  \sum_{t=0}^{\tinf-1}  \left(\norm{\tmpc_t(\cc{t}) - \rmpc(\cc{t})} + \norm{ \rmpc(\cc{t})- \piref(\cc{t})}\right), \nn  \\
		&\leq \LQref  \sum_{t=0}^{\tinf-1}  \left(\eps/\|B\|  +  \norm{ \rmpc(\cc{t})- \piref(\cc{t})} \right), \quad \text{by \eqref{eq:specific}}  \, \nn  \\ 
		&\overset{(b)}{\leq} \LQref  \sum_{t=0}^{\tinf-1}  \left(\eps/\|B\|  +  \kappa \cdot (\Lpi + \|\klqr\|)\eps\right)  \,. \label{eq:last}
		\end{align} 
Here, $(a)$ follows from the fact that $\cc{t} \in \Olqrref$ for $t \geq \tinf$ (by \autoref{thm:stability}), and hence, $\tmpc_t(\cc{t}) = \piref(\cc{t}) = \dlqr(\cc{t})$ due to \eqref{lqr_tinf}. On the other hand, $(b)$ follows by the fact that 
		\begin{align*}
		    \norm{ \rmpc(\cc{t})- \piref(\cc{t})}  &= \norm{\piref(\xref_0(\cc{t}))  + \klqr (\cc{t} -\xref_0(\cc{t})) - \piref(\cc{t})} \leq (\Lpi + \|\klqr\|) \norm{\cc{t}-\xref_0(\cc{t})}\\
		    &\leq (\Lpi + \|\klqr\|)\cdot \kappa \eps\,,
		\end{align*}
		where the last inequality follows from the fact that $\hat x_t -\xref_0(\xx{t}) \in \invamin$ (where $\invamin$ is as in \eqref{eq:min_inv}) together with \autoref{claim:inva} ($\kappa$ is defined in the latter). The desired result now follows by \eqref{eq:last} and the fact that $\tinf \leq \tstar$ (by \autoref{thm:stability}).

 \section{Feasibility and Stability of Robust MPC (Proof of \autoref{thm:mayne})}
\label{pf:mayne}

We begin with some facts that readily follow from the definitions.
\begin{proposition}[{\citep[Proposition 2]{mayne2005robust}}]
   For $x \in (\X_0 \oplus \inva) \cap \X$,  let $\Vrob (x)$ be the optimal cost of \ref{eq:mpc_robust}, and for each $x\in \X_0$, let  $\Vmpcref(x)$ be the optimal cost of \ref{eq:mpcref}.
   Then, the following holds:
\btri
    \item For any $x  \in (\X_0 \oplus \inva) \cap \X$, $\Vrob(x) = \Vmpcref(\xref_0(x))$ and the optimal solution of \ref{eq:mpc_robust} $(\uref_0(x),\uref_1(x),\cdots,\uref_{N-1}(x))$  is equal to the optimal solution of \ref{eq:mpcref} with $\xinit = \xref_0(x)$.   
    \item For all $x\in \inva$,  $\Vrob(x)=0$, $\xref_0(x) =0$, and $(\uref_0(x),\uref_1(x),\cdots,\uref_{N-1}(x)) =(0,0,\dots,0)$ and $\rmpc(x) = \klqr x$.
\eli
\end{proposition}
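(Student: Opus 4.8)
The plan is to exploit that the objective $V_N(x_0,\vu)$ of \eqref{eq:mpc_robust} depends on the measured state $x$ only through the membership constraint $x\in x_0\oplus\inva$; consequently the joint minimization over the pair $(x_0,\vu)$ factors into an inner minimization over $\vu$ (for a fixed nominal initial state $x_0$) and an outer minimization over $x_0$. For the first bullet, I would fix $x_0$ and minimize $V_N(x_0,\vu)$ over $\vu$ subject to the constraints of \eqref{eq:mpcref}. By the very definition of \eqref{eq:mpcref}, this inner problem has optimal value $\Vmpcref(x_0)$ and its minimizer is the MPCref control sequence of $\piref$ initialized at $x_0$. Since $\inva=\B(\kappa\eps)$ is symmetric, the constraint $x\in x_0\oplus\inva$ is equivalent to $x_0\in x\oplus\inva$, so \eqref{eq:mpc_robust} reduces to $\Vrob(x)=\min_{x_0\in(x\oplus\inva)\cap\Xzeroref}\Vmpcref(x_0)$, whose minimizer I would name $\xref_0(x)$. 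The optimal control sequence of \eqref{eq:mpc_robust} is then exactly that of \eqref{eq:mpcref} started at $\xref_0(x)$, which is the first bullet. Existence of the minimizer follows from continuity of $\Vmpcref$ and closedness/compactness of the feasible set, which I would flag as a standard technical point.

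For the second bullet, assume $x\in\inva$ and verify that $x_0=0$ is feasible for the outer problem: $0\in\Xzeroref$ because the shrunk sets $\Xref=\X\ominus\inva$, $\Uref=\U\ominus\klqr\inva$ and the terminal set $\Olqrref$ all contain the origin (the origin is an equilibrium of $A_{\klqr}$, and I would make explicit the mild assumptions $\inva\subseteq\X$ and $\klqr\inva\subseteq\U$), and $x\in 0\oplus\inva=\inva$ holds by hypothesis. At $x_0=0$ the choice $\vu=0$ keeps the nominal trajectory at the origin, so $\Vmpcref(0)=0$. Because $V_N\ge 0$ with $R\succ0$ and the terminal weight $\plqr\succ0$, one has $\Vmpcref(x_0)>0$ for $x_0\ne0$, so the outer minimizer is unique and equals $\xref_0(x)=0$, with zero optimal control sequence $\uref_0(x)=\dots=\uref_{N-1}(x)=0$ and $\Vrob(x)=0$. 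Substituting $\xref_0(x)=0$ and $\uref_0(x)=0$ into the controller definition $\rmpc(x)=\uref_0(x)+\klqr(x-\xref_0(x))$ then gives $\rmpc(x)=\klqr x$.

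The step I expect to be the main obstacle is the strict positive-definiteness claim $\Vmpcref(x_0)=0\iff x_0=0$ needed to pin down $\xref_0(x)=0$ uniquely: while $R\succ0$ forces $u_t=0$ along any zero-cost trajectory and $\plqr\succ0$ forces the terminal state to vanish, concluding $x_0=0$ requires propagating through the free dynamics $x_{t+1}=Ax_t$ and using the terminal condition, and some care is needed since $Q$ is only positive semidefinite (so the running state cost alone does not immediately certify $x_0=0$). Everything else is a routine rewriting of the two nested optimization problems from their definitions.
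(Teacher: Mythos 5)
Your proof is correct in structure, and it is worth noting that the paper itself offers no proof of this proposition: it is presented as ``facts that readily follow from the definitions'' with a pointer to \citet[Proposition 2]{mayne2005robust}. Your nested-optimization decomposition---fix the nominal initial state $x_0$, minimize over $\vu$ to recover $\Vmpcref(x_0)$ together with the $\piref$-sequence as the unique inner minimizer (uniqueness holds because $R\succ 0$ makes $V_N(x_0,\cdot)$ strictly convex in $\vu$), then minimize $\Vmpcref$ over $x_0\in (x\oplus\inva)\cap\Xzeroref$---is the natural way to make the first bullet rigorous, and is essentially the argument underlying the cited reference. The symmetry of $\inva$ that you invoke is available here, since $\inva=\B(\kappa\eps)$ (and $\invamin$ is likewise symmetric, being a Minkowski sum of linear images of balls), though it is cosmetic: the decomposition goes through with the constraint written as $x-x_0\in\inva$ regardless. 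Your explicit nondegeneracy assumptions $\inva\subseteq\X$ and $\klqr\inva\subseteq\U$ (so that $0\in\Xzeroref$) are exactly the conditions the paper assumes implicitly for \ref{eq:mpcref} to be well posed.

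The step you flag in the second bullet is a genuine subtlety under the paper's stated assumptions, and you diagnose it accurately. With $Q\succeq 0$ only, zero cost forces $\vu=0$ (by $R\succ 0$) and then $x_N=A^Nx_0=0$ (by $\plqr\succ 0$), but if $A$ is singular this does not force $x_0=0$; the outer minimizer then need not be unique, and the claim $\xref_0(x)=0$ (hence $\rmpc(x)=\klqr x$) would hold only as a selection convention rather than as a consequence. The gap closes under any of the standard hypotheses that are implicit in this setting: $Q\succ 0$, which is what \citet{mayne2005robust} assume and under which the proposition is being quoted; $A$ invertible, so $A^N x_0 = 0$ gives $x_0=0$ directly; or $(A,Q^{1/2})$ observable with $N\geq d_x$, so that $\sum_{k=0}^{N-1}(A^kx_0)^\top Q A^k x_0=0$ already forces $x_0=0$---an assumption in line with the paper's requirement that the discrete algebraic Riccati equation admit a unique \emph{positive definite} solution $\plqr$. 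With any one of these made explicit, your argument is complete.
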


Next, we present the following result that plays a crucial role in showing the robustness of $\rmpc$.

\begin{proposition}[{\citep[Proposition 1]{mayne2001robustifying}}] \label{prop:invariant}
		Suppose that $x-\barx \in  \inva$, then $x^+ = Ax +B(u+\klqr(x-\barx))+w$ and $\barx^+ = A\barx + Bu$ satisfy $x^+ - \barx^+ \in  \inva$  for all  $w\in\W$.
	\end{proposition}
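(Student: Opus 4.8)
The plan is to reduce the claim to a one-line algebraic identity followed by a direct application of the defining property of the disturbance invariant set $\inva$ from \autoref{def:disturbance_inva}. First I would write out the two update rules and subtract them, cancelling the common term $Bu$:
\begin{align*}
x^+ - \barx^+ &= Ax + B(u + \klqr(x - \barx)) + w - A\barx - Bu \\
&= A(x - \barx) + B\klqr(x - \barx) + w \\
&= A_{\klqr}(x - \barx) + w,
\end{align*}
where the last equality uses $A_{\klqr} = A + B\klqr$ as defined in \autoref{def:disturbance_inva}. The only conceptual content here is recognizing that the feedback correction $B\klqr(x - \barx)$ is exactly what turns the open-loop matrix $A$ into the closed-loop matrix $A_{\klqr}$ acting on the error $x - \barx$.

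The key step is then to invoke the hypothesis together with the invariance property. Since $x - \barx \in \inva$, we have $A_{\klqr}(x - \barx) \in A_{\klqr}\inva$, and since $w \in \W$, the error $x^+ - \barx^+$ lies in the Minkowski sum $A_{\klqr}\inva + \W$. By the defining inclusion $A_{\klqr}\inva + \W \subseteq \inva$ of a disturbance invariant set, we conclude $x^+ - \barx^+ \in \inva$, which is precisely the claim. Because this inclusion is uniform over the summand $w$, the conclusion holds for every $w \in \W$ as required.

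I do not anticipate a genuine obstacle: the statement is essentially a restatement of the invariance condition once the error dynamics are identified, so the proof is a short computation plus a set inclusion. The one subtlety worth flagging explicitly in the write-up is that the identity $x^+ - \barx^+ = A_{\klqr}(x-\barx) + w$ holds by construction of the control law $u + \klqr(x - \barx)$; this is the whole point of interpolating the nominal input $u$ with the stabilizing feedback $\klqr(x - \barx)$, and it is what makes $\inva$ (built to be invariant under $A_{\klqr}$ plus $\W$) the correct object to track the deviation between the true and nominal trajectories.
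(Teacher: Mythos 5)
Your proof is correct and takes essentially the same approach as the paper: subtract the two updates to obtain $x^+ - \barx^+ = A_{\klqr}(x-\barx) + w$, then apply the defining inclusion $A_{\klqr}\inva + \W \subseteq \inva$ from \autoref{def:disturbance_inva}. (Your middle line is in fact slightly more careful than the paper's, which contains a typo omitting the factor $B$ in front of $\klqr(x-\barx)$.)
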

	\begin{proof}
		From the above definition, we have
		\begin{align*}
			x^+- \barx^+ &=Ax +B(u+\klqr(x-\barx))+w - (A\barx + Bu)  \\
			&= A(x-\barx)  + \klqr(x-\barx) +w \\
			&= A_{\klqr} (x-\barx) + w\,.
		\end{align*} 
Hence by definition of $\inva$ (see \autoref{def:disturbance_inva}), it follows that  $x^+- \barx^+\in \inva$.
	\end{proof}

Now we show that $\rmpc$ stabilizes the system under the presence of disturbances.
\begin{lemma}\label{lem:mayne}
For $x\in (\X_0 \oplus \inva) \cap \X$, let $\xref_0(x)$, $\uref_0(x)$, $\uref_1(x)$, $\cdots$, $\uref_{N-1}(x)$ be the optimal solution to \ref{eq:mpc_robust}, and $\xref_1(x)$, $\xref_2(x)$, $\dots$, $\xref_N(x)$ be the associated state trajectory for the nominal system without disturbances.
Then, for all $x^+ \in (Ax + B\rmpc(x)) \oplus\W$, it holds that $\xref_1(x)$, $\uref_1(x)$, $\uref_2(x)$, $\dots$, $\uref_{N-1}(x)$, $\klqr \xref_N(x)$ is feasible for \ref{eq:mpc_robust} with $\xinit = x^+$ and 
\begin{align*}
    \Vrob(x^+) - \Vrob(x) \leq  -\ell(\xref_0(x), \piref(\xref_0(x)))\,.
\end{align*}
Here $\ell(x,u)$ denotes the stage cost, i.e., $\ell(x,u) = x^\top Qx + u^\top R u$.
\end{lemma}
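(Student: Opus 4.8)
The plan is to run the classical Model Predictive Control stability argument adapted to the robust (tube) setting: I construct an explicit feasible ``warm-start'' candidate for \ref{eq:mpc_robust} at the successor state $x^+$ by shifting the current optimal solution forward by one step and appending a terminal LQR action, and then upper bound $\Vrob(x^+)$ by the cost of this candidate. The gap between the candidate cost and $\Vrob(x)$ will telescope down to $-\ell(\xref_0(x),\uref_0(x))$ once the terminal ingredients are shown to cancel.

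First I would establish feasibility of the shifted candidate, whose nominal initial state is $\xref_1(x)$ and whose control sequence is $\uref_1(x),\dots,\uref_{N-1}(x),\klqr\xref_N(x)$. There are three things to check. (i) The successor state must admit $\xref_1(x)$ as a valid nominal initial state, i.e.\ $x^+-\xref_1(x)\in\inva$. Writing $x^+=Ax+B\rmpc(x)+w$ with $w\in\W$ and recalling $\rmpc(x)=\uref_0(x)+\klqr(x-\xref_0(x))$ together with $\xref_1(x)=A\xref_0(x)+B\uref_0(x)$, this is exactly the conclusion of \autoref{prop:invariant} applied with the nominal pair $(\barx,u)=(\xref_0(x),\uref_0(x))$, using that optimality of the RMPC solution forces $x-\xref_0(x)\in\inva$. (ii) The shifted states $\xref_1(x),\dots,\xref_N(x)$ must lie in $\Xref$ and the shifted inputs $\uref_1(x),\dots,\uref_{N-1}(x)$ in $\Uref$; these hold because they were part of the optimal trajectory for $x$ and because $\Xterref=\Olqrref\subseteq\Xref$. (iii) The appended terminal action $\klqr\xref_N(x)$ must satisfy $\klqr\xref_N(x)\in\Uref$ and drive $\xref_N(x)$ to a new terminal state $(A+B\klqr)\xref_N(x)\in\Xterref$; both follow from $\xref_N(x)\in\Olqrref$ and the positive invariance of $\Olqrref$ with respect to the LQR controller (\autoref{def:pos_inv}).

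Second, with feasibility in hand I would bound $\Vrob(x^+)$ by the nominal cost $V_N$ of the candidate. Comparing this candidate cost term-by-term against $\Vrob(x)=V_N(\xref_0(x),(\uref_0(x),\dots,\uref_{N-1}(x)))$, the shared stage costs $\ell(\xref_j(x),\uref_j(x))$ for $j=1,\dots,N-1$ cancel, leaving the difference
\[
\ell(\xref_N(x),\klqr\xref_N(x)) + (A_{\klqr}\xref_N(x))^\top\plqr(A_{\klqr}\xref_N(x)) - \xref_N(x)^\top\plqr\xref_N(x) - \ell(\xref_0(x),\uref_0(x)).
\]
The key identity is that, because $\Pter=\plqr$ solves the Riccati equation \eqref{pare}, the terminal cost is an exact value function for the unconstrained LQR problem, so $x^\top\plqr x=\ell(x,\klqr x)+((A+B\klqr)x)^\top\plqr((A+B\klqr)x)$ for every $x$; applied at $x=\xref_N(x)$ this makes the first three terms cancel exactly (the equality case of the CLF condition \eqref{cond:control_lya}). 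What remains is $-\ell(\xref_0(x),\uref_0(x))=-\ell(\xref_0(x),\piref(\xref_0(x)))$, using $\uref_0(x)=\piref(\xref_0(x))$. Since the candidate is feasible, $\Vrob(x^+)$ is at most its cost, which yields the claimed inequality.

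The main obstacle is the robust/tube bookkeeping in the feasibility step rather than the cost algebra: one must carefully separate the \emph{nominal} predicted trajectory (which lives in the tightened sets $\Xref,\Uref,\Olqrref$) from the \emph{true} disturbed state $x^+$, and verify that the one-step shift keeps the true state inside the tube $\xref_1(x)\oplus\inva$. This is precisely where the disturbance-invariance property $A_{\klqr}\inva+\W\subseteq\inva$ (via \autoref{prop:invariant}) is essential; the remaining terminal-set and input-constraint checks, and the terminal-cost cancellation, are then routine consequences of the choices $\Xterref=\Olqrref$ and $\Pter=\plqr$.
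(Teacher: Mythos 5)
Your proposal is correct and follows essentially the same route as the paper's proof: the identical shifted candidate $\big(\xref_1(x);\, \uref_1(x),\dots,\uref_{N-1}(x),\klqr\xref_N(x)\big)$, with feasibility established via \autoref{prop:invariant} (tube containment $x^+\in\xref_1(x)\oplus\inva$) and the positive invariance of the terminal set $\Olqrref$. The only difference is in the cost-decrease step: the paper bounds $\Vrob(x^+)\leq \Vmpcref(\xref_1(x))$ and invokes the standard nominal value-function decrease for \ref{eq:mpcref}, whereas you inline that cited result by carrying out the Riccati-equation cancellation of the terminal costs explicitly---the content is the same, just made self-contained.
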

\begin{proof}[{\bf Proof of \autoref{lem:mayne}}]
    First, note that since $x\in \xref_0(x) \oplus \inva$, it follows from \autoref{prop:invariant} that $x^+ \in \xref_1(x) \oplus \inva$. 
    Since $\xref_0(x)$, $\uref_0(x)$, $\uref_1(x)$, $\cdots$, $\uref_{N-1}(x)$ are feasible solution to \ref{eq:mpc_robust} with $\xinit = x$, it holds that 
    $\xref_1(x)$, $\uref_1(x)$, $\uref_2(x)$, $\dots$, $\uref_{N-1}(x)$ satisfy all the constraints of \ref{eq:mpc_robust} with respect to $\xinit =x^+$.
    Now since, $\xref_N(x) \in \Xter$, we have $\klqr \xref_N(x) \in  \U\ominus \klqr \inva $ and $A_{\klqr} \xref_N(x) \in \Xter$.
    This shows that $\xref_1(x)$, $\uref_1(x)$, $\uref_2(x)$, $\dots$, $\uref_{N-1}(x)$, $\klqr \xref_N(x)$ is feasible for \ref{eq:mpc_robust} with respect to $\xinit = x^+$.
    
    The proof of the value function decrease follows the standard stability analysis of MPC.
    Since $\xref_1(x)$, $\uref_1(x)$, $\uref_2(x)$, $\dots$, $\uref_{N-1}(x)$, $\klqr \xref_N(x)$ is feasible for \ref{eq:mpc_robust} with respect to $\xinit = x^+$, it holds that $\Vrob(x^+)$ is upper bounded by   $\Vmpcref (\xref_1(x))$.
    From the stability result of \ref{eq:mpcref},  it holds that $\Vmpcref (\xref_1(x)) - \Vmpcref (\xref_0(x)) \leq - \ell(\xref_0(x), \piref(\xref_0(x)))$.
    Since $\Vrob(x) = \Vmpcref(\xref_0(x))$, the desired inequality follows.
\end{proof}

With \autoref{lem:mayne}, the rest of the proof follows immediately as follows. 
First, note that there exist absolute constants $C>c>0$ such that for any $x\in (\X_0 \oplus \inva) \cap \X$, the following holds:
\bli
\item From the property of the value function $\Vmpcref$, we have  $\Vrob(x) = \Vmpcref(\xref_0(x)) \geq c \norm{\xref_0(x)}^2$.
\item Moreover,  we have  $\Vrob(x) = \Vmpcref(\xref_0(x)) \leq C \norm{\xref_0(x)}^2$.
\item From \autoref{lem:mayne}, we ahve $\Vrob(x^+) - \Vrob(x) \leq - \ell(\xref_0(x), \rmpc(\xref_0(x))) \leq - c \norm{\xref_0(x)}^2$ for all $x^+ \in (Ax + B\rmpc(x)) \oplus\W$.
\eli
From the above facts, it follows that  $\Vrob(x^+) - \Vrob(x) \leq - c\norm{\xref_0(x)}^2 \leq -\frac{c}{C} \Vrob(x)$, 
which shows $\Vrob(x^+) \leq (1-c/C) \Vrob(x)$. 
Therefore, we obtain
\begin{align*}
    c\norm{\xref_0(x_t)}^2 \leq  \Vrob(x_t) \leq (1-c/C)^t \Vrob(\xinit)  = C (1-c/C)^t \norm{\xref_0(\xinit)}^2\,,
\end{align*}
as desired.
\section{Experimental Details}
\label{app:matrix}

For $d=3$, the $A$ matrix we used is
\begin{align*}
    \begin{bmatrix}
   1.1 &   0.86075747 &  0.4110535\\ 
   0. &    1.1 &    0.4110535 \\
   0. &        0. &      1.1
    \end{bmatrix}\,.
\end{align*} 
 For $d=5$, the $A$ matrix we used is
\begin{align*}
    \begin{bmatrix}
     1.1 & 0.86075747 &  0.4110535 &0.17953273 & -0.3053808 \\
     0. &    1.1 &    0.4110535 &  0.17953273 &-0.3053808 \\ 0. &  0. & 1.1 & 0.17953273 &-0.3053808 \\   0. & 0. &  0. &   1.1 & -0.3053808 \\  0. & 0. & 0. &   0. & 1.1 
    \end{bmatrix}\,.
\end{align*}

\end{document}